\renewcommand{\S}{{\mathbb S}^{d-1}}
\newcommand{\nrm}[2]{\|{#1}\|_{\L^{#2}(\R^d)}}
\newcommand{\nrmcnd}[2]{\|{#1}\|_{\L^{#2}(\mathcal C)}}
\newcommand{\nrmC}[2]{\|{#1}\|_{#2}}
\newcommand{\icnd}[1]{\int_{\mathcal C}{#1}\;dy}
\newcommand{\iC}[1]{\int_{\mathcal C}{#1}\;d\mu}
\newcommand{\be}[1]{\begin{equation}\label{#1}}
\newcommand{\ee}{\end{equation}}
\renewcommand{\S}{{\mathbb S^{d-1}}}
\newcommand{\C}[1]{\mathsf C_{\rm #1}}
\newcommand{\K}[1]{\mathsf K_{\rm #1}}
\newcommand{\N}{\mathbb N}
\newcommand{\R}{{\mathbb R}}
\renewcommand{\L}{\mathrm L}
\renewcommand{\H}{\mathrm H}
\renewcommand{\(}{\left(}
\renewcommand{\)}{\right)}
\renewcommand{\S}{{\mathbb S^{d-1}}}
\newcommand{\ird}[1]{\int_{\R^d}{#1}\;dx}
\newcommand{\nrmrd}[2]{\|{#1}\|_{\L^{#2}(\R^d)}}
\newcommand{\xa}{\alpha}
\newcommand{\xg}{\gamma}
\newcommand{\xl}\lambda
\newcommand{\xL}\Lambda
\newcommand{\bea}{\begin{eqnarray}}
\newcommand{\eea}{\end{eqnarray}}
\newcommand{\bean}{\begin{eqnarray*}}
\newcommand{\eean}{\end{eqnarray*}}
\newcommand{\la}{\label}
\newcommand{\subjclass}[1]{\thanks{2010 \emph{Mathematics Subject Classification.} #1}}
\begin{document}

\title{Rigidity results with applications to best constants and symmetry of Caffarelli-Kohn-Nirenberg and logarithmic Hardy inequalities}
\titlerunning{Symmetry in Caffarelli-Kohn-Nirenberg and logarithmic Hardy inequalities}

\author{Jean Dolbeault \and Maria J. Esteban \and Stathis Filippas \and Achilles Tertikas}

\institute{
J. Dolbeault \at Ceremade, Univ. Paris-Dauphine, Pl. de Lattre de Tassigny, 75775 Paris C\'edex~16, France
\email{dolbeaul@ceremade.dauphine.fr}
\and
M.J. Esteban \at Ceremade, Univ. Paris-Dauphine, Pl. de Lattre de Tassigny, 75775 Paris C\'edex~16, France
\email{esteban@ceremade.dauphine.fr}
\and
S. Filippas \at Department of Mathematics, Univ. of Crete, Knossos Avenue, 714 09 Heraklion
\& Institute of Applied and Computational Mathematics, FORTH, 71110 Heraklion, Crete, Greece
\email{filippas@tem.uoc.gr}
\and
A. Tertikas \at Department of Mathematics, Univ. of Crete, Knossos Avenue, 714 09 Heraklion
\& Institute of Applied and Computational Mathematics, FORTH, 71110 Heraklion, Crete, Greece
\email{tertikas@math.uoc.gr}
}

\date{\today}

\maketitle

\begin{abstract}
We take advantage of a rigidity result for the equation satisfied by an extremal function associated with a special case of the Caffarelli-Kohn-Nirenberg inequalities to get a symmetry result for a larger set of inequalities. The main ingredient is a reparametrization of the solutions to the Euler-Lagrange equations and estimates based on the rigidity result. The symmetry results cover a range of parameters which go well beyond the one that can be achieved by symmetrization methods or comparison techniques so far.
\end{abstract}

\keywords{Caffarelli-Kohn-Nirenberg inequalities; Hardy-Sobolev inequality; extremal functions; ground state; bifurcation; branches of solutions; Emden-Fowler transformation; radial symmetry; symmetry breaking; rigidity; Keller-Lieb-Thirring inequalities}

\noindent\subjclass{26D10 \and 46E35 \and 35J20 \and 49J40}

\maketitle
\thispagestyle{empty}

\section{Introduction and main results}

Let $2^*:=\infty$ if $d=1$, $2$, and $2^*:=2\,d/(d-2)$ if $d\ge3$. Define 
\[
\vartheta(p,d):=\frac{d\,(p-2)}{2\,p}, \hspace{1,5cm} a_c:=\frac{d-2}{2} \ ,
\]
and consider the space $\mathcal D_a^{1,2}(\R^d)$ obtained by completion of $\mathcal D(\R^d\setminus\{0\})$ with respect to the norm $v\mapsto\nrm{\,|x|^{-a}\,\nabla v\,}2^2$. We will be concerned with the following two families of inequalities

\vspace{3mm}

\noindent{\bf Caffarelli-Kohn-Nirenberg Inequalities (CKN) } {\rm \cite{Caffarelli-Kohn-Nirenberg-84}}
Let $d\ge 1$. For any $p\in[2, 2^*]$ if $d\ge3$ or $p\in[2, 2^*)$ if $d=1$, $2$, for any $\theta\in[\vartheta(p,d),1]$ with $\theta>1/2$ if $d=1$, there exists a positive constant $\C{CKN}(\theta,p,a)$ such that 
\be{Ineq:GenInterp}
\(\;\ird{\frac{|v|^p}{|x|^{b\,p}}}\)^\frac2p\le\C{CKN}(\theta,p,a)\(\;\ird{\frac{|\nabla v|^2}{|x|^{2\,a}}}\)^{\!\theta}\(\;\ird{\frac{|v|^2}{|x|^{2\,(a+1)}}}\)^{\!1-\theta}
\ee
holds true for any $v\in\mathcal D^{1,2}_{a}(\R^d)$. Here $a$, $b$ and $p$ are related by $b=a-a_c+d/p$, with the restrictions $a\le b\le a+1$ if $d\ge3$, $a<b\le a+1$ if $d=2$ and $a+1/2<b\le a+1$ if $d=1$. Moreover, the constants $\C{CKN}(\theta,p,a)$ are uniformly bounded outside a neighborhood of $a=a_c$.

In \cite{DDFT}, a new class of inequalities, called {\it weighted logarithmic Hardy inequalities}, was considered. These inequalities can be obtained from \eqref{Ineq:GenInterp} by taking $\theta=\gamma\,(p-2)$ and passing to the limit as $p\to2_+$.

\vspace{3mm}\noindent{\bf Weighted Logarithmic Hardy Inequalities (WLH)}{\rm \cite{DDFT}}
Let $d\ge 1$, $a<a_c$, $\gamma\ge d/4$ and $\gamma>1/2$ if $d=2$. Then there exists a positive constant $\C{WLH}(\gamma,a)$ such that, for any $v\in\mathcal D^{1,2}_{a}(\R^d)$ normalized by 
\[
\ird{|x|^{-2\,(a+1)}\,|v|^2}=1 \ ,
\] we have
\be{Ineq:GLogHardy}
\ird{\frac{|v|^2}{|x|^{2\,(a+1)}}\,\log \(|x|^{2\,(a_c-a)}\,|v|^2\)}\le2\,\gamma\,\log\left[\C{WLH}(\gamma,a)\ird{\frac{|\nabla v|^2}{|x|^{2\,a}}}\right].
\ee
Moreover, the constants $\C{WLH}(\gamma,a)$ are uniformly bounded outside a neighborhood of $a=a_c$.

It is very convenient to reformulate the Caffarelli-Kohn-Nirenberg inequality in cylindrical variables as in~\cite{Catrina-Wang-01}. By means of the Emden-Fowler transformation
\[
s=\log|x|\in\R\ ,\quad\omega=\frac{x}{|x|}\in\S\ ,\quad y=(s,\omega)\ ,\quad u(y)=|x|^{a_c-a}\,v(x)\ ,
\]
Inequality~\eqref{Ineq:GenInterp} for $v$ is equivalent to a Gagliardo-Nirenberg-Sobolev inequality for the function $u$ on the cylinder $\mathcal C:=\R\times\S$:
\be{Ineq:Gen_interp_Cylinder}
\K{CKN}(\theta, p, \Lambda)\,\nrmcnd up^2\le \(\nrmcnd{\nabla u}2^2\!+\!\Lambda\,\nrmcnd u2^2\)^\theta \nrmcnd u2^{2\,(1-\theta)}\quad\kern-1.66pt\forall\,u\in\H^1(\mathcal C)\ .
\ee
Here and throughout the rest of the work we set
\[
\Lambda:=(a_c-a)^2 \ .
\]
Similarly, with $u(y)=|x|^{a_c-a}\,v(x)$, Inequality \eqref{Ineq:GLogHardy} is equivalent to
\be{Ineq:GLogHardy-w}
\icnd{|u|^2\,\log |u|^2}\le2\,\gamma\,\log\left[\frac{1}{\K{WLH}(\gamma,\xL)}\Big(\nrmcnd{\nabla u}2^2+\Lambda\Big)\right] \ ,
\ee
for any $u\in\H^1(\mathcal C)$ such that $\nrmcnd u2=1$. In both cases, we consider on~$\mathcal C$ the measure
$d\mu=|\S|^{-1}d\omega\,ds$ obtained by normalizing the surface of $\S$ to $1$ (that is, the uniform probability measure), tensorized with the usual Lebesgue measure on the axis of the cylinder. 

We are interested in \emph{symmetry} and \emph{symmetry breaking} issues: when do we know that equality in \eqref{Ineq:GenInterp} and~\eqref{Ineq:GLogHardy} is achieved by radial functions or, alternatively, by functions depending only on $s$ in~\eqref{Ineq:Gen_interp_Cylinder} and~\eqref{Ineq:GLogHardy-w}? Related with inequality~(\ref{Ineq:Gen_interp_Cylinder}) is the Rayleigh quotient: 
\[
\mathcal Q_\Lambda^\theta[u]:=\frac{\(\nrmC{\nabla u}2^2+\Lambda\,\nrmC u2^2\)^\theta\,\nrmC u2^{2\,(1-\theta)}}{\nrmC up^2}\ .
\]
Here $\nrmC uq:=\(\iC{|u|^q}\)^{1/q}$. Then \eqref{Ineq:Gen_interp_Cylinder} and~\eqref{Ineq:GLogHardy-w} are equivalent to state that
\begin{eqnarray*}
&&\K{CKN}(\theta,p,\Lambda)=\inf_{u\in\H^1(\mathcal C)\setminus\{0\}}\mathcal Q_\Lambda^\theta[u]\ ,\\
&&\K{WLH}(\gamma,\Lambda)=\inf_{\begin{array}{c}\scriptstyle u\in\H^1(\mathcal C)\setminus\{0\}\cr\scriptstyle\nrmC u2=1\end{array}}\(\nrmC{\nabla u}2^2+\Lambda\)\,e^{-\frac 1{2\,\gamma}\iC{|u|^2\,\log |u|^2}}\ .
\end{eqnarray*}
Let $\K{CKN}^*(\theta,p,\Lambda)$
and $\K{WLH}^*(\gamma,\Lambda)$ be the corresponding values of the infimum when the set of minimization is restricted to functions depending only on $s$. The main interest of introducing the measure $d\mu$ is that $\K{CKN}^*(\theta,p,\Lambda)$ and $\K{WLH}^*(\gamma,\Lambda)$ are independent of the dimension and can be computed for $d=1$ by solving the problem on the real line $\R$. 

Radial symmetry of $v=v(x)$ means that $u=u(s,\omega)$ is independent of~$\omega$. Up to translations in $s$ and a multiplication by a constant, the optimal functions in the class of functions depending only on~$s\in\R$ solve the equation
\[\label{eq:onedim}
-u_*''+\Lambda\,u_*=u_*^{p-1}\quad\mbox{in}\quad \R
\]
if $\theta=1$. See Section~\ref{Sec:Parametrization} if $\theta<1$. Up to translations in $s$, non-negative solutions of this equation are all equal to the function
\be{ustar}
u_*(s):=\frac A{\big[\cosh(B\,s)\big]^\frac 2{p-2}}\quad\forall\,s\in\R\ ,
\ee
with $A^{p-2}=\tfrac p2\,\Lambda$ and $B=\tfrac12\,\sqrt\Lambda\,(p-2)$. The uniqueness up to translations is a standard result (see for instance \cite[Proposition B.2]{Dolbeault06082014} for a proof).

The symmetry breaking issue is now reduced to the question of knowing whether the inequalities
\be{K-Kstar}
\K{CKN}(\theta,p,\Lambda)\le\K{CKN}^*(\theta,p,\Lambda)\quad\mbox{and}\quad\K{WLH}(\gamma,\Lambda)\le\K{WLH}^*(\gamma,\Lambda)
\ee
are strict or not, when $d\ge2$. Symmetry breaking occurs if the inequality is strict and then optimal functions \emph{are not} symmetric (symmetric means: depending only on $s$ in the setting of the cylinder, or on $|x|$ in the case of the Euclidean space). In \cite[pp.~2048 and 2057]{DDFT}, the values of the symmetric constants have been computed. They are given by
\be{EqnK}
\textstyle \K{CKN}^*(\theta,p, \xL):=\left[\frac{2p \theta +2-p}{(p-2)^2}\right]^\frac{p-2}{2\,p}\left[\frac{2 p \theta}{2 p \theta +2 -p}\right]^\theta\left[\frac {p+2}4\right]^\frac{6-p}{2\,p}
\left[\frac{\sqrt\pi\;\Gamma\(\frac2{p-2}\)}{\Gamma\(\frac2{p-2}+\frac 12\)}\right]^\frac{p-2}p \kern-5pt\Lambda^{\theta-\frac{p-2}{2\,p}}
\ee
and
\begin{eqnarray*}
&&\textstyle\K{WLH}^*(\gamma,\Lambda)=\frac{\gamma\,\(8\pi^{d+1}\,e\)^\frac1{4\gamma}}{\Gamma\(\frac{d}2\)^\frac1{2\gamma}}\,
\(\frac{4 \xL}{4\gamma-1}\)^\frac{4\gamma-1}{4\gamma}\;\mbox{if}\;\gamma>\frac14\ ,\\
&&\textstyle\K{WLH}^*(\gamma,\Lambda)=\frac{2\pi^{d+1}\,e}{\Gamma\(\frac{d}2\)^2} \;\;\;\;\;~~~~~~~~~~\mbox{if}\;\;~\;\gamma=\frac14\ .
\end{eqnarray*}

Let
\be{lambdastar}\textstyle
\Lambda_{\rm FS}(\theta,p,d):=4\,\frac{d-1}{p^2-4}\,\frac{(2\,\theta-1)\,p+2}{p+2}\quad\mbox{and}\quad\Lambda_\star(1,p,d):=\frac14\,(d-1)\,\frac{6-p}{p-2}\ .
\ee
We will define $\Lambda_\star(\theta,p,d)$ for $\theta<1$ later in the Introduction. Symmetry breaking occurs for any $\Lambda>\Lambda_{\rm FS}$ according to a result of V.~Felli and M.~Schneider in \cite{Felli-Schneider-03} for $\theta=1$ and in \cite{DDFT} for $\theta<1$ (also see \cite{Catrina-Wang-01} for previous results and \cite{MR2437030} if $d=2$ and $\theta=1$). This symmetry breaking is a straightforward consequence of the fact that for $\Lambda>\Lambda_{\rm FS}$, the symmetric \emph{optimals} are saddle points of an energy functional, and thus cannot be even local minima. As a consequence, we know that $\K{CKN}(\theta,p,\Lambda)<\K{CKN}^*(\theta,p,\Lambda)$ if $\Lambda>\Lambda_{\rm FS}(\theta,p,d)$.

Concerning the log Hardy inequality, it was shown in \cite{DDFT} that symmetry breaking occurs, that is, 
$\K{WLH}(\gamma,\Lambda) < \K{WLH}^*(\gamma,\Lambda)$,  when either $d=2$ and $\xg>1/2$ or 
$d\geq 3$ and $\xg \geq d/4$ provided that 
\[
\xL > (d-1)\( \xg- \tfrac14 \) \ .
\]

Concerning symmetry, if $\theta=1$, from \cite{DEL2011}, we know that symmetry holds for CKN for any $\Lambda\le\Lambda_\star(1,p,d)$. The precise statement goes as follows.
\begin{theorem}\label{Thm:DEL2011}{\rm \cite{DEL2011}} Let $d\ge 2$. For any $p\in[2, 2^*]$ if $d\ge3$ or $p\in[2,\infty)$ if $d=2$, under the conditions
\[
0<\mu\le\Lambda_\star(1,p,d) \hspace{8mm} {\rm and} \hspace{8mm} \mathcal Q_\mu^1[u]\le\K{CKN}^*(1,p,\mu) \ ,
\]
the solution of
\be{EL}
-\,\Delta u+\mu\,u=u^{p-1}\quad\mbox{on}\quad\mathcal C
\ee
is given by the one-dimensional equation, written on $\R$. It is unique, up to translations.\end{theorem}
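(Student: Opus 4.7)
The argument is a rigidity result for positive critical points of the Rayleigh quotient $\mathcal Q_\mu^1$ on the cylinder $\mathcal C=\R\times\S$. By elliptic regularity and the maximum principle, any non-negative $\H^1$-solution $u$ of~(\ref{EL}) may be assumed smooth and strictly positive. Splitting the Laplacian on $\mathcal C$ as $\Delta=\partial_s^2+\Delta_\xo$, we shall exploit the spectral gap $d-1$ of $-\Delta_\xo$ acting on functions on $\S$ orthogonal to the constants.

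First, introduce a nonlinear substitution $u=f^\xa$ with an exponent $\xa=\xa(p)$ chosen so that~(\ref{EL}) transforms into a nonlinear diffusion equation for $f$ exhibiting Bakry--\'Emery ``carr\'e du champ'' structure. Multiplying this transformed equation by a suitable combination of $\Delta f$, $|\nabla f|^2/f$ and $f$, and integrating by parts on $\mathcal C$ (boundary contributions at $s=\pm\infty$ vanish by $\H^1$-decay), the aim is to produce an algebraic identity of the schematic shape
\be{rigidity-identity}
0 \;=\; \iC{\mathcal P[f]} \;+\; \big(\Lambda_\star(1,p,d)-\mu\big)\iC{\mathcal A[f]} \;+\; \iC{\mathcal B[f]\,|\nabla_\xo f|^2},
\ee
in which $\mathcal P[f]\ge 0$ is a pointwise Bochner-type quadratic form in $\nabla f$ and $\nabla^2 f$ that vanishes precisely when $f$ is a translate of the one-dimensional profile, and $\mathcal A[f],\mathcal B[f]\ge 0$. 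The Poincar\'e inequality on $\S$ with sharp constant $d-1$ is what secures $\mathcal B[f]\ge 0$; the Rayleigh-quotient hypothesis $\mathcal Q_\mu^1[u]\le\K{CKN}^*(1,p,\mu)$ is used at a single place to absorb a residual ``non-angular'' term not controlled directly by the spectral gap.

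Under the assumption $\mu\le\Lambda_\star(1,p,d)$, the coefficient $\Lambda_\star-\mu$ is non-negative, and so every term on the right of~(\ref{rigidity-identity}) must vanish; in particular $\nabla_\xo u\equiv 0$. Therefore $u=u(s)$ solves $-u''+\mu\,u=u^{p-1}$ on $\R$, and the uniqueness statement recalled just after~(\ref{ustar}) identifies $u$ with a translate of~(\ref{ustar}) in which $\Lambda$ is replaced by $\mu$.

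The main obstacle is the algebraic step that produces~(\ref{rigidity-identity}): one must choose $\xa$ and the multiplier so that after expansion and integration by parts, (i) the pointwise curvature-type form $\mathcal P[f]$ comes out non-negative (a Bochner/Cauchy--Schwarz computation), (ii) the coefficient of $\iC{\mathcal A[f]}$ is \emph{exactly} $\Lambda_\star(1,p,d)-\mu$ with $\Lambda_\star=\tfrac14(d-1)\tfrac{6-p}{p-2}$, and (iii) the sphere term has a coefficient $\mathcal B[f]$ whose non-negativity reduces to the sharp Poincar\'e constant $d-1$ on $\S$. Producing this three-way balance simultaneously, with the threshold landing precisely at $\Lambda_\star(1,p,d)$, is the heart of the argument, and the cancellations depend sensitively on both the exponent $p$ of the nonlinearity and the spectral-gap constant $d-1$.
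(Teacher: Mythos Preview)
The paper does not prove Theorem~\ref{Thm:DEL2011}; it is quoted from \cite{DEL2011}, and only a short description of the original method is given. That description matters here: the proof in \cite{DEL2011} is \emph{not} a direct carr\'e du champ computation on the cylinder. It proceeds by freezing the axial variable $s$, applying optimal interpolation inequalities on~$\S$ to the function $\omega\mapsto u(s,\omega)$, and then feeding the resulting one-dimensional estimate into a \emph{Keller--Lieb--Thirring inequality} for a Schr\"odinger operator on~$\R$. The threshold $\Lambda_\star(1,p,d)=\tfrac14\,(d-1)\,\tfrac{6-p}{p-2}$ arises from combining the sharp constant of the sphere inequality with the sharp constant of Keller--Lieb--Thirring; the hypothesis $\mathcal Q_\mu^1[u]\le\K{CKN}^*(1,p,\mu)$ is exactly what closes this chain of inequalities into equalities, forcing $u$ to depend on $s$ only.

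Your proposal takes a different route, a Bakry--\'Emery identity directly on $\mathcal C$, and the difficulty you yourself flag as the ``main obstacle'' is genuine and left open. The cylinder has vanishing Ricci curvature in the $s$-direction, so a Bochner/carr\'e du champ computation does not by itself produce a useful sign on the full gradient term; once the angular and axial contributions are separated, the axial part is controlled by neither curvature nor the spectral gap of $-\Delta_\omega$. You say the Rayleigh-quotient hypothesis is used ``at a single place to absorb a residual non-angular term,'' but that is precisely the step where \cite{DEL2011} invokes Keller--Lieb--Thirring, and you do not supply any substitute. Without that ingredient there is no mechanism in your outline that forces the coefficient to land on $\tfrac14\,(d-1)\,\tfrac{6-p}{p-2}$ rather than on something weaker or trivial. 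In short: the schematic identity with properties (i)--(iii) simultaneously is asserted but not produced, and the structure of the original proof suggests that an additional one-dimensional spectral input is what actually makes the argument close.
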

Theorem~\ref{Thm:DEL2011} is a \emph{rigidity result}. In \cite{DEL2011}, the proof is given for a minimizer of~$\mathcal Q_\mu^1$, which therefore satisfies $\mathcal Q_\mu^1[u]\le\K{CKN}^*(1,p,\mu)$, but the reader is invited to check that only the latter condition is used in the proof. The proof is based on a chain of estimates which involve optimal interpolation inequalities on the sphere and the Keller-Lieb-Thirring inequality. These inequalities turn out to be equalities, and equality in each of the inequalities is shown to imply that the solution only depends on $s$ (no angular dependence). The result of Theorem~\ref{Thm:DEL2011} gives a sufficient condition for symmetry when $\theta=1$. We shall say that \emph{any minimizer is symmetric} if it is given by~\eqref{ustar}, up to multiplications by constants and translations. 
\begin{theorem}\label{Cor:DEL2011}{\rm \cite{DEL2011}} Let $d\ge 2$. For any $p\in[2,2^*]$ if $d\ge3$ or any $p\in[2,\infty)$ if $d=2$, if
$0<\Lambda\le\Lambda_\star(1,p,d)$, then $\K{CKN}(1,p,\Lambda)=\K{CKN}^*(1,p,\Lambda)$ and any minimizer is symmetric.\end{theorem}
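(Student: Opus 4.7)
The plan is to reduce the equality $\K{CKN}(1,p,\Lambda)=\K{CKN}^*(1,p,\Lambda)$ and the symmetry of every minimizer to a single application of the rigidity statement in Theorem~\ref{Thm:DEL2011}. The inequality $\K{CKN}(1,p,\Lambda)\le\K{CKN}^*(1,p,\Lambda)$ is immediate because the class of functions depending only on $s$ is contained in $\H^1(\mathcal C)$, so only the reverse direction and the identification of minimizers need work. The natural strategy is: (i) produce a minimizer $u$ of $\mathcal Q_\Lambda^1$ on $\H^1(\mathcal C)$; (ii) rescale $u$ so that it solves the Euler--Lagrange equation \eqref{EL} with $\mu=\Lambda$; (iii) observe that by construction $\mathcal Q_\Lambda^1[u]=\K{CKN}(1,p,\Lambda)\le\K{CKN}^*(1,p,\Lambda)$; (iv) invoke Theorem~\ref{Thm:DEL2011} with $\mu=\Lambda$ to conclude that $u$ depends only on $s$.

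For step~(i), in the strictly subcritical case $p<2^*$ one has compactness of the embedding $\H^1(\mathcal C)\hookrightarrow\L^p(\mathcal C)$ modulo translations along the $s$-axis, and a concentration-compactness or direct Schwarz-symmetrization-in-$s$ argument provides a minimizer. In the critical case $p=2^*$ (only relevant for $d\ge3$), existence on the cylinder is equivalent, via the Emden--Fowler change of variables, to existence of extremals for Hardy--Sobolev-type inequalities on $\R^d$ with weight $|x|^{-2a}$; for $\Lambda>0$ the mass term $\Lambda\,\nrmcnd u2^2$ restores compactness in the relevant range, and existence follows from \cite{Catrina-Wang-01}. In either case one may, after translation in $s$, assume the minimizer is non-negative, and standard elliptic regularity gives $u\in C^2(\mathcal C)$.

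For step~(ii), any minimizer $u$ of $\mathcal Q_\Lambda^1$, normalized by $\nrmcnd up=1$, satisfies $-\Delta u+\Lambda\,u=\lambda\,u^{p-1}$ on $\mathcal C$ with $\lambda=\K{CKN}(1,p,\Lambda)>0$. The scaling $u\mapsto\lambda^{1/(p-2)}u$ preserves $\mathcal Q_\Lambda^1$ and transforms the equation into \eqref{EL} with $\mu=\Lambda$, so the rescaled function still satisfies $\mathcal Q_\Lambda^1[u]=\K{CKN}(1,p,\Lambda)\le\K{CKN}^*(1,p,\Lambda)$. Under the hypothesis $0<\Lambda\le\Lambda_\star(1,p,d)$, Theorem~\ref{Thm:DEL2011} applies verbatim and yields that $u$ is, up to translation in $s$, the one-dimensional profile $u_*$ of \eqref{ustar}. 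Consequently $\mathcal Q_\Lambda^1[u]=\K{CKN}^*(1,p,\Lambda)$, which gives the matching lower bound and shows that \emph{every} minimizer is symmetric.

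The main obstacle I anticipate is step~(i), that is, existence of a minimizer, particularly at the Sobolev-critical endpoint $p=2^*$, where one must simultaneously rule out vanishing (handled by translating in $s$) and escape of mass to infinity along the cylinder (handled by the subadditivity of the Hardy--Sobolev-type level, together with the fact that $\Lambda>0$ yields a strictly subadditive energy). Once existence is established, the rest of the proof is essentially a direct application of Theorem~\ref{Thm:DEL2011}, whose hypotheses are precisely tailored to the situation at hand.
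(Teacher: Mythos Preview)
Your approach is correct and is exactly the intended one. In the paper, Theorem~\ref{Cor:DEL2011} is not given a separate proof: it is quoted from \cite{DEL2011} and presented as an immediate consequence of the rigidity result, Theorem~\ref{Thm:DEL2011}, applied to a minimizer of $\mathcal Q_\Lambda^1$. The paper's only comment on the matter is the remark preceding the statement that, in \cite{DEL2011}, the proof of Theorem~\ref{Thm:DEL2011} is written for a minimizer (hence a function automatically satisfying $\mathcal Q_\mu^1[u]\le\K{CKN}^*(1,p,\mu)$) and that only this energy inequality is actually used. Your steps (ii)--(iv) are precisely this reduction, and your discussion of existence in step~(i), including the critical endpoint $p=2^*$ handled via \cite{Catrina-Wang-01}, fills in the one detail the paper leaves implicit.
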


In~\cite{DEL2011}, the case $\theta<1$ is also considered. According to~\cite[Theorem~9]{DEL2011}, for any $d\ge 3\,$, any $p\in(2,2^*)$ and any $\theta\in[\vartheta(p,d),1)\,$, we have the estimate
\be{K:Interval}
\mathfrak C(\theta,p)^{-\frac{2\,\theta}{q+2}}\,\mathsf K_{\rm CKN}^*(\theta,\Lambda,p)\le\mathsf K_{\rm CKN}(\theta,\Lambda,p)\le\mathsf K_{\rm CKN}^*(\theta,\Lambda,p)
\ee
where $q:=\frac{2\,(p-2)}{(2\,\theta-1)\,p+2}$ and
\[
\mathfrak C(\theta,p):=\tfrac{(p+2)^\frac{p+2}{(2\,\theta-1)\,p+2}}{(2\,\theta-1)\,p+2}\,\(2-\tfrac p2\,(1-\theta)\)^{1-\frac q2}\\
\cdot\(\frac{\Gamma(\frac p{p-2})}{\Gamma(\frac{\theta\,p}{p-2})}\)^{\!2\,q}\,\(\frac{\Gamma(\frac{2\,\theta\,p}{p-2})} {\Gamma(\frac{2\,p}{p-2})}\)^{\!q}
\]
under the condition $a_c^2<\Lambda\le\frac{(d-1)}{\mathfrak C(\theta,p)}\,\frac{(2\,\theta-3)\,p+6}{4\,(p-2)}$. If $\theta=1$, the equality case in the last inequality characterizes $\Lambda_\star(1,p,d)$ as defined in \eqref{lambdastar}. However \eqref{K:Interval} does not give a range for symmetry unless $\theta=1$.

Much more is known. According to \cite{DELT09,springerlink:10.1007/s00526-011-0394-y}, there is a continuous curve $p\mapsto\Lambda_{\rm s}(\theta,p,d)$ with $\lim_{p\to2_+}\Lambda_{\rm s}(\theta,p,d)=\infty$ and $\Lambda_{\rm s}(\theta,p,d)>a_c^2$ for any \hbox{$p\in(2,2^*)$} such that symmetry holds for any $\Lambda\le\Lambda_{\rm s}(1,p,d)$ and there is symmetry breaking if \hbox{$\Lambda>\Lambda_{\rm s}(1,p,d)$}, for any $\theta\in[\vartheta(p,d),1)$. Additionally, we have that $\lim_{p\to2^*}\Lambda_{\rm s}(1,p,d)=a_c^2$ if $d\ge3$ and, if $d=2$, $\lim_{p\to\infty}\Lambda_{\rm s}(1,p,d)=0$ and $\lim_{p\to\infty}p^2\Lambda_{\rm s}(1,p,d)=4$. The existence of this function~$\Lambda_{\rm s}$ has been proven in an indirect way, and it is not explicitly known. It has been a long-standing question to decide whether the curves $p\to\Lambda_{\rm s}(\theta,p,d)$ and the curve $p\to\Lambda_{\rm FS}(\theta,p,d)$ coincide or not. This is still an open question, at least for $\theta=1$. For $\theta<1$, and for some specific values of $p$, it has been shown that, in some cases, $\Lambda_{\rm s}(\theta,p,d)<\Lambda_{\rm FS}(\theta,p,d)$; see~\cite{springerlink:10.1007/s00526-011-0394-y} for more details, as well as some symmetry results based on symmetrization techniques. A scenario based on numerical computations and asymptotic expansions at the point where non-symmetric positive solutions bifurcate from the symmetric ones has been proposed; see \cite{Oslo,Freefem,DE2012} for details.

\medskip Our interest in this work is to establish symmetry of the minimizers of CKN for $\theta <1$ as well as of the log Hardy inequalities, thus identifying the corresponding sharp constants.

Our first result is an extension of Theorem~\ref{Cor:DEL2011} to the case $\theta<1$. Our goal is to give explicit estimates of the range for which symmetry holds. This requires some notations and a preliminary result. We set
\be{m5}
\Pi^*(\theta,p,q):=\(\frac{\K{CKN}^*(\theta,p,1)}{\K{CKN}^*(1,q,1)^{ \frac{q\,(p-2)}{p\,(q-2)} }}\)^{\frac1{\theta-\frac{q\,(p-2)}{p\,(q-2)}}} \ .
\ee
Next we define
\be{qstar}
q^*=q^*(\theta,p):=\frac{2\,p\,\theta}{2-p\,(1-\theta)}\ .
\ee
The condition $\theta> \frac{q\,(p-2)}{p\,(q-2)} $ is equivalent to $q>q^*(\theta,p)$ and we can notice that $p<q^*(\theta,p)<2^*$ for any $\theta \in (\vartheta(p,d), 1)$.
For $d \geq 3$ we define
\[\label{nonexplicit}
\Lambda_1(\theta,p,d):=\max_{q\in(q^*,2^*)}\;\min\left\{\Lambda_\star(1,q,d),\frac{\theta\,\Lambda_\star(1,p,d)}{(1-\theta)\,\Pi^*(\theta,p,q)+\theta}\right\}\ ,
\]
whereas for $d=2$
\[
\Lambda_1(\theta,p,2):=\max_{q\in(q^*,6)}\;\min\left\{\Lambda_\star(1,q,2),\frac{\theta\,\Lambda_\star(1,p,2)}{(1-\theta)\,\Pi^*(\theta,p,q)+\theta}\right\}\ .
\]
Next, we can also define
\be{Ntp}
\mathsf N(\theta,p):=\frac{\big(\K{CKN}^*(\theta,p,1)\big)^{1/\theta}}{\K{CKN}^*\big(1,q^*(\theta,p),1\big)}\ .
\ee
We refer to Section~\ref{Sec:Lambda2} for an explicit expression of $\mathsf N(\theta,p)$. We introduce the exponent
\be{beta}
\beta=\beta(\theta,p):=1-\frac{p-2}{2\,p\,\theta}\ .
\ee
For $2<p<6$ and $\theta\in(\vartheta(p,3),1)$ we denote by $\mathsf x^*=\mathsf x^*(\theta,p)$ the unique root
of the equation
\[
\theta\,(6-p)\big(x^\beta-\mathsf N\big)\,x-\big(2\,p\,\theta-3\,(p-2)\big)\(\theta\,\big(x^\beta-\mathsf N\big)+(1-\theta)\,(x-1)\,\mathsf N \) =0 \ ,
\]
in the interval $(\mathsf N^{1/\beta},\infty)$ for $\mathsf N=\mathsf N(\theta, p)$, see Lemma~\ref{Lem:xstar}
in Section~\ref{Sec:Lambda2}. Next we define 
\[\label{Lambda2-beta}
\Lambda_2(\theta,p,d):=\frac{\Lambda_\star(1,q^*,d)}{ \mathsf x^*(\theta,p)} = \frac14\,(d-1)\,\frac{2\,p\,\theta-3\,(p-2)}{(p-2)\,\mathsf x^*(\theta,p)}\ ,
\]
and
\[
\Lambda_\star(\theta,p,d):=\max\Big\{\Lambda_1(\theta,p,d),\Lambda_2(\theta,p,d)\Big\}\ .
\]
\begin{theorem}\label{Thm:Main1} Suppose that either $d=2$ and $p\in(2,6)$ or else $d \geq 3$ and $p \in(2,2^*)$. Then 
\[
\K{CKN}(\theta ,p,\Lambda)=\K{CKN}^*(\theta ,p,\Lambda) \ ,
\]
and any minimizer of CKN \eqref{Ineq:Gen_interp_Cylinder} is \emph{symmetric} provided that one
of the following conditions is satisfied:
\begin{enumerate}
\item[(i)] $d=2$, $\theta\in(\vartheta(p,2),1)$ and $0<\Lambda\le\Lambda_1(\theta,p,2)$.
\item[(ii)] $d=2$, $\theta\in(\vartheta(p,3),1)$ and $0<\Lambda\le\Lambda_\star(\theta,p,2)$,
\item[(iii)] $d\ge3$, $\theta=\vartheta(p,d)$ and $0<\Lambda\le\Lambda_2(\theta,p,d)$,
\item[(iv)] $d\ge3$, $\theta\in(\vartheta(p,d),1)$ and $0<\Lambda\le\Lambda_\star(\theta,p,d)$ \ .
\end{enumerate}
\end{theorem}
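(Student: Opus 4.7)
The plan is to reduce the case $\theta<1$ to the rigidity result Theorem~\ref{Thm:DEL2011} for $\theta=1$ by reparametrizing the Euler--Lagrange equation of a minimizer and then bootstrapping the $\theta=1$ symmetry at an auxiliary exponent $q$. Let $u\in\H^1(\mathcal C)$ be a nonnegative minimizer of $\mathcal Q_\Lambda^\theta$ (existence in the subcritical regime and positivity may be assumed by standard arguments). Its Euler--Lagrange equation, after rescaling $u$ by a positive constant to absorb the Lagrange multiplier, reads
\[
-\Delta u+\mu\,u=u^{p-1}\ \text{ on }\mathcal C\,,\qquad \mu:=\Lambda+\tfrac{1-\theta}{\theta}\,\tfrac{\nrmC{\nabla u}{2}^2+\Lambda\,\nrmC{u}{2}^2}{\nrmC{u}{2}^2}\,,
\]
with $\mu\ge\Lambda$, equality iff $\theta=1$. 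Testing against $u$ yields the compact identity $\nrmC{u}{p}^p=\nrmC{\nabla u}{2}^2+\mu\,\nrmC{u}{2}^2=(A+\Lambda B)/\theta$ (with $A:=\nrmC{\nabla u}{2}^2$, $B:=\nrmC{u}{2}^2$), hence $\mathcal Q_\mu^1[u]=\nrmC{u}{p}^{p-2}$. Theorem~\ref{Thm:DEL2011} applied to $u$ then yields symmetry as soon as the two conditions $\mu\le\Lambda_\star(1,p,d)$ and $\nrmC{u}{p}^{p-2}\le\K{CKN}^*(1,p,\mu)$ can be verified.

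For the $\Lambda_1$-range, fix $q\in(q^*(\theta,p),2^*)$ (resp.\ $q\in(q^*,6)$ if $d=2$) and note that $t:=\frac{q(p-2)}{p(q-2)}<\theta$; by H\"older, $\nrmC{u}{p}^2\le\nrmC{u}{2}^{2(1-t)}\nrmC{u}{q}^{2t}$. Provided $\mu\le\Lambda_\star(1,q,d)$, Theorem~\ref{Cor:DEL2011} gives $\K{CKN}(1,q,\mu)=\K{CKN}^*(1,q,\mu)$, whence $\nrmC{u}{q}^2\le\K{CKN}^*(1,q,\mu)\,(A+\mu B)$. Feeding this back into the H\"older bound and using the minimization identity $\mathcal Q_\Lambda^\theta[u]=\K{CKN}(\theta,p,\Lambda)\le\K{CKN}^*(\theta,p,\Lambda)$ (the always-valid inequality \eqref{K-Kstar}), together with the explicit $\Lambda$-homogeneity of $\K{CKN}^*$ in~\eqref{EqnK}, the residual algebra collapses through the dimensionless ratio $\Pi^*(\theta,p,q)$ of~\eqref{m5}. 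A direct computation then shows that the single sufficient condition
\[
\Lambda\le\min\!\left\{\Lambda_\star(1,q,d),\ \frac{\theta\,\Lambda_\star(1,p,d)}{(1-\theta)\,\Pi^*(\theta,p,q)+\theta}\right\}
\]
secures both hypotheses of Theorem~\ref{Thm:DEL2011} for $u$. Maximizing over $q$ produces the range $\Lambda_1(\theta,p,d)$; thus $u$ is symmetric, equals the profile~\eqref{ustar} up to translation and scaling, and $\K{CKN}(\theta,p,\Lambda)=\K{CKN}^*(\theta,p,\Lambda)$.

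For the $\Lambda_2$-range (the only range available in case (iii), where $\theta=\vartheta(p,d)$ forces $q^*=2^*$; otherwise combined with the previous step via $\Lambda_\star=\max(\Lambda_1,\Lambda_2)$ in cases (ii) and (iv)), the same argument is performed at the critical endpoint $q=q^*$, where $t=\theta$ and H\"older is tight. The role of $\Pi^*(\theta,p,q)$ is then played by its boundary value $\mathsf N(\theta,p)$ from~\eqref{Ntp}, and the pair of conditions of Theorem~\ref{Thm:DEL2011} merges into a scalar polynomial constraint on the normalized variable $\mathsf x=\mu/\Lambda_\star(1,q^*,d)$ whose unique relevant root $\mathsf x^*(\theta,p)$ produces $\Lambda_2=\Lambda_\star(1,q^*,d)/\mathsf x^*$. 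The main technical obstacle is exactly this bookkeeping: after threading all the explicit $\Lambda$-powers of~\eqref{EqnK} through the H\"older/interpolation step, one must show that the two \emph{a priori} distinct conditions of the $\theta=1$ rigidity are simultaneously implied by a single explicit upper bound on $\Lambda$, and that the optimization over $q$---including the degenerate Sobolev-critical endpoint $q=q^*$---yields the closed-form ranges $\Lambda_1$ and $\Lambda_2$ stated in the theorem.
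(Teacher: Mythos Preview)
Your proposal follows the paper's strategy—reparametrize the Euler--Lagrange equation to $-\Delta u+\mu\,u=u^{p-1}$ and then invoke the rigidity Theorem~\ref{Thm:DEL2011}—but two key pieces of the execution differ from the paper and, as written, do not close.

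First, the hypothesis $\mathcal Q_\mu^1[u]\le\K{CKN}^*(1,p,\mu)$ of Theorem~\ref{Thm:DEL2011} is \emph{not} obtained from the H\"older/interpolation step. The paper isolates it as Proposition~\ref{Prop:propcond1} and proves it by a self-contained computation: with $t=\nrmC{\nabla u}2^2/\nrmC u2^2$ one has $\mathcal Q_\mu^1[u]=\nrmC up^{p-2}\le f(t)\,\K{CKN}^*(1,p,\mu)$, where $f$ is an explicit function built from~\eqref{EqnK} whose maximum over $t>0$ equals $1$ (attained at $t_0=\Lambda\,(\tfrac{2p\theta}{p-2}-1)^{-1}$). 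Hence this hypothesis holds automatically for any minimizer, and the interpolation argument is used solely to force the \emph{other} hypothesis, $\mu\le\Lambda_\star(1,p,d)$.

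Second, in that interpolation step you feed the unknown, solution-dependent $\mu$ into the sharp CKN inequality at exponent $q$; the paper uses instead an auxiliary \emph{free} parameter~$\lambda$. Combining $(t+\Lambda)^\theta\nrmC u2^2/\nrmC up^2\le\K{CKN}^*(\theta,p,\Lambda)$ with H\"older and with $\nrmC uq^2\le(t+\lambda)\,\nrmC u2^2/\K{CKN}^*(1,q,\lambda)$ (valid whenever $\lambda\le\Lambda_\star(1,q,d)$) yields inequality~\eqref{m000}, in which the norms of $u$ cancel and one can solve for $t$ explicitly. For the $\Lambda_1$-range the paper takes the ansatz $\lambda=\Lambda$, giving $t\le(\Pi^*(\theta,p,q)-1)\,\Lambda$ and hence the stated bound on $\mu$. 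For the $\Lambda_2$-range the new idea, beyond choosing $q=q^*$ so that $1-\delta=\theta$, is precisely to allow $\lambda\ne\Lambda$ and to optimize over $x:=\lambda/\Lambda$: the polynomial in Lemma~\ref{Lem:xstar} encodes the crossing of the two resulting constraints $\Lambda\le\phi(x)$ and $\Lambda<\chi(x)=\Lambda_\star(1,q^*,d)/x$, and $\mathsf x^*$ is the root in \emph{that} variable, not in $\mu/\Lambda_\star(1,q^*,d)$ as you write. Using $\mu$ in place of $\lambda$ makes the prerequisite $\mu\le\Lambda_\star(1,q,d)$ circular (you do not yet control $\mu$), and the algebra no longer reduces to the conditions defining $\Lambda_1$ and $\Lambda_2$. (Minor: your display $\nrmC uq^2\le\K{CKN}^*(1,q,\mu)\,(A+\mu B)$ has the constant on the wrong side.)
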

Our definition of $\Lambda_\star(\theta,p,d)$ for $\theta<1$ is consistent with the definition of $\Lambda_\star(1,p,d)$ given in \eqref{lambdastar} because
\[
\lim_{\theta\to1}\Lambda_1(\theta,p,d)=\lim_{\theta\to1}\Lambda_2(\theta,p,d)=\Lambda_\star(1,p,d)\ .
\]
One of the drawbacks in the definition of $\Lambda_2(\theta,p,d)$ is that $\mathsf x^*(\theta,p)$ given by Lemma~\ref{Lem:xstar} is not explicit. For an explicit estimate of $\Lambda_2(\theta,p,d)$ see Proposition~\ref{Prop:Estim1} in Section~\ref{Sec:Numerics}.

\medskip By passing to the limit as $p\to2_+$ in the criterion $\Lambda\le\Lambda_2(\theta,p,d)$, we also obtain an explicit condition for symmetry in the weighted logarithmic Hardy inequalities. For any $\mathsf N_0>1$, consider the smallest root $x>\mathsf N_0^{1/\beta_0}$ of
\[
4\,\gamma\,x^{\beta_0+1}-(8\,\gamma-3)\,\mathsf N_0 \,x+(4\,\gamma-3)\,\mathsf N_0 \,=0\quad\mbox{with}\quad\beta_0=1-\frac1{4\,\gamma}
\]
and denote it by $\mathsf x_0^*(\gamma)$ if $\mathsf N_0=\mathsf N_0(\gamma):=\lim_{p\to2_{+}}\mathsf N(\gamma\,(p-2),p)$. An elementary but tedious computation shows that
\be{N0}
\mathsf N_0(\gamma)=2^{1-\frac3{4\,\gamma}}\,e^\frac1{4\,\gamma}\,\frac{(2\,\gamma-1)^{1-\frac1\gamma}}{(4\,\gamma-1)^{1-\frac3{4\,\gamma}}}\(\frac{\Gamma\(2\,\gamma-\frac12\)}{\Gamma\(2\,\gamma-1\)}\)^{\!\frac 1{2\,\gamma}}.
\ee
Let us define
\be{Lambda0}
\Lambda_0(\gamma,d):=\frac{(d-1)\,\(\gamma-3/4\)}{\mathsf x_0^*(\gamma)}\ .
\ee
We then have
\begin{theorem}\label{Cor:WLH} Assume that either $d=2$ or $3$ and $\gamma>3/4$, or $d\ge4$ and $\gamma\ge d/4$. Then 
\[
\textstyle\K{WLH}(\gamma,\Lambda)=\textstyle\K{WLH}^*(\gamma,\Lambda) \ ,
\] and any minimizer of~\eqref{Ineq:GLogHardy-w} is \emph{symmetric} provided that
\[
0< \Lambda\le\Lambda_0(\gamma,d) \ .
\]\end{theorem}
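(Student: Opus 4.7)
The plan is to deduce Theorem~\ref{Cor:WLH} from Theorem~\ref{Thm:Main1} by passing to the limit $p \to 2_+$ along the curve $\theta = \theta_p := \gamma\,(p-2)$, mirroring the scaling that produces (WLH) as a limit of (CKN) already recalled in the introduction. The hypotheses of Theorem~\ref{Cor:WLH} are exactly those that, for $p$ slightly greater than $2$, place $\theta_p$ in the admissibility range of Theorem~\ref{Thm:Main1}: the condition $\theta_p \ge \vartheta(p,d) = d\,(p-2)/(2\,p)$ becomes $\gamma \ge d/(2\,p)$, which in the limit yields $\gamma \ge d/4$ when $d \ge 4$; in dimensions $d = 2, 3$ the use of cases~(ii) and~(iv), which involve $\vartheta(p,3)$, forces $\gamma > 3/4$. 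Hence for $p > 2$ close enough to~$2$, Theorem~\ref{Thm:Main1} yields $\K{CKN}(\theta_p, p, \Lambda) = \K{CKN}^*(\theta_p, p, \Lambda)$ and symmetry of minimizers whenever $0 < \Lambda \le \Lambda_2(\theta_p, p, d)$.

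The crucial algebraic step is then to verify that $\Lambda_2(\theta_p, p, d) \to \Lambda_0(\gamma, d)$ as $p \to 2_+$. From \eqref{beta} one reads $\beta(\theta_p, p) = 1 - 1/(2\,p\,\gamma) \to \beta_0 = 1 - 1/(4\,\gamma)$. Substituting $\theta = \gamma\,(p-2)$ into \eqref{EqnK} and \eqref{Ntp}, and using the explicit form of $\mathsf N$ promised in Section~\ref{Sec:Lambda2}, a direct (if tedious) computation gives $\mathsf N(\theta_p, p) \to \mathsf N_0(\gamma)$ with $\mathsf N_0$ as in~\eqref{N0}. Dividing the defining equation for $\mathsf x^*(\theta_p, p)$ by the common factor $(p-2)$ and using $2\,p\,\theta_p - 3\,(p-2) = (p-2)\,(2\,p\,\gamma - 3)$, one finds that in the limit it reduces to
\[
4\,\gamma\,x^{\beta_0+1} - (8\,\gamma - 3)\,\mathsf N_0\,x + (4\,\gamma - 3)\,\mathsf N_0 = 0,
\]
which is precisely the equation defining $\mathsf x_0^*(\gamma)$. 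The cancellation of the $(p-2)$ factors in the definition of $\Lambda_2$ then yields $\Lambda_2(\theta_p, p, d) \to (d-1)\,(\gamma - 3/4)/\mathsf x_0^*(\gamma) = \Lambda_0(\gamma, d)$.

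With these pieces in hand, fix $0 < \Lambda < \Lambda_0(\gamma, d)$ and choose $p > 2$ close enough to $2$ so that $\Lambda \le \Lambda_2(\theta_p, p, d)$. For any $u \in \H^1(\mathcal C)$ with $\nrmC u 2 = 1$, inequality~\eqref{Ineq:Gen_interp_Cylinder} with the symmetric constant $\K{CKN}^*(\theta_p, p, \Lambda)$ holds; taking $\frac{2}{p-2}$ times the logarithm of both sides and letting $p \to 2_+$ (so that $(|u|^p - |u|^2)/(p-2) \to |u|^2 \log|u|^2$) reproduces~\eqref{Ineq:GLogHardy-w} with constant $\K{WLH}^*(\gamma, \Lambda)$, which combined with the trivial reverse inequality gives $\K{WLH}(\gamma, \Lambda) = \K{WLH}^*(\gamma, \Lambda)$; the endpoint $\Lambda = \Lambda_0(\gamma, d)$ follows by continuity. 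For the claim that every minimizer of (WLH) is symmetric, one argues by contradiction: a non-symmetric minimizer of (WLH) would, by the same scaling, generate non-symmetric near-minimizers of $\mathcal Q_\Lambda^{\theta_p}$ for $p$ close to $2$, contradicting the symmetry portion of Theorem~\ref{Thm:Main1}.

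The main obstacle is this last passage: while the convergence $\Lambda_2 \to \Lambda_0$ and the passage to the limit at the level of the infimum are essentially computational, transferring symmetry from CKN minimizers to \emph{every} WLH minimizer requires uniform estimates on critical points under the $\Gamma$-type convergence as $p \to 2_+$, excluding loss of compactness. An attractive alternative, avoiding any limit, is to adapt the rigidity scheme of Theorems~\ref{Thm:DEL2011}--\ref{Thm:Main1} directly in the logarithmic framework, replacing the interpolation inequalities on $\S$ by their logarithmic-Sobolev limits and the Keller--Lieb--Thirring estimate by its log-partition-function analogue, so as to produce the symmetry conclusion as a rigidity result for the Euler--Lagrange equation of~\eqref{Ineq:GLogHardy-w} itself.
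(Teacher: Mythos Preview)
Your approach is essentially the same as the paper's: set $\theta_p=\gamma\,(p-2)$, show that the defining equation for $\mathsf x^*(\theta_p,p)$ divided by $(p-2)$ converges to the equation for $\mathsf x_0^*(\gamma)$, deduce $\Lambda_2(\theta_p,p,d)\to\Lambda_0(\gamma,d)$, and pass to the limit in the symmetry result from Theorem~\ref{Thm:Main1}. The paper's proof is in fact considerably more terse than yours: after checking that $f_0$ has a unique root in $(\mathsf N_0^{1/\beta_0},\infty)$ and that $\mathsf x^*(\gamma(p-2),p)\to\mathsf x_0^*(\gamma)$, it simply states that ``symmetry then is established by passing to the limit for any $\Lambda\in(0,\Lambda_0(\gamma,d))$'' without spelling out the limiting argument at the level of the inequalities or of the minimizers.

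Your final paragraph flags a genuine subtlety: the passage from symmetry of CKN minimizers to symmetry of \emph{every} WLH minimizer via $p\to2_+$ does require some form of compactness or $\Gamma$-convergence control, and your suggested alternative (running the rigidity scheme directly in the logarithmic setting) is a reasonable way to close this. It is worth noting, however, that the paper's own proof does not address this point either; it implicitly treats the limit as unproblematic. So your proposal is not only aligned with the paper's argument but is more careful about what that argument actually delivers.
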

For an explicit estimate of $\Lambda_0(\xg, d)$ see Proposition~\ref{Prop:Estim2} in Section~\ref{Sec:Numerics}.

Theorem~\ref{Thm:Main1} provides us with a \emph{rigidity result}, which is stronger than a simple symmetry result. As a consequence, our estimates of Theorem~\ref{Thm:Main1} for the symmetry region cannot be optimal.
\begin{theorem}\label{Thm:Main2} Suppose that either $d = 2$ and $p\in(2,6)$ or else $d \ge 3$ and $p\in(2,2^*)$. If $\theta>\vartheta(p,\min\{3,d\})$, then 
\[
\Lambda_\star(\theta,p,d)<\Lambda_{\rm s}(\theta,p,d)\le\Lambda_{\rm FS}(\theta,p,d)\ .
\]
If either $d=3$ and $\theta=\vartheta(p,3)$, or $d=2$ and $\theta>0$, then
\[
\Lambda_2(\theta,p,d)<\Lambda_{\rm s}(\theta,p,d)\le\Lambda_{\rm FS}(\theta,p,d)\ .
\]\end{theorem}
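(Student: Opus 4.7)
The plan is to exploit the fact that Theorem~\ref{Thm:Main1} delivers a genuine \emph{rigidity} statement---every positive solution of the Euler-Lagrange system is symmetric, not merely every minimizer---which is strictly stronger than symmetry of minimizers. Since the bound $\Lambda_{\rm s}(\theta,p,d)\le\Lambda_{\rm FS}(\theta,p,d)$ is the symmetry-breaking result already recalled in the Introduction, the task reduces to proving the strict inequality $\Lambda_\star<\Lambda_{\rm s}$ (respectively $\Lambda_2<\Lambda_{\rm s}$ in the cases falling under items (iii)--(iv) of Theorem~\ref{Thm:Main1}). I would argue both by a single contradiction scheme.

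Assume $\Lambda_\star=\Lambda_{\rm s}$. By the definition of $\Lambda_{\rm s}$ one picks a sequence $\Lambda_n\downarrow\Lambda_\star$ together with non-symmetric minimizers $u_n$ of $\mathcal Q_{\Lambda_n}^\theta$. Normalize by $\nrmC{u_n}p=1$ and translate in $s$ so that each $u_n$ is centered at the origin (for instance, so that the essential supremum is attained at $s=0$). Uniform boundedness of $\K{CKN}(\theta,p,\cdot)$ away from $a=a_c$ yields a uniform $\H^1(\mathcal C)$ bound on $\{u_n\}$. A concentration-compactness argument---in which vanishing is ruled out by the centering and dichotomy by strict subadditivity of the corresponding ground state level---together with elliptic regularity applied to the Euler-Lagrange equation (in its form with a Lagrange multiplier when $\theta<1$, cf.~Section~\ref{Sec:Parametrization}) produces, along a subsequence, a limit $u_\infty$ in $C^2_{\rm loc}(\mathcal C)$. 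This $u_\infty$ is a non-trivial positive critical point at $\Lambda_\star$, and the rigidity part of Theorem~\ref{Thm:Main1} forces $u_\infty=u_*$ up to $s$-translation, where $u_*$ is the symmetric ground state from~\eqref{ustar}.

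The contradiction comes from an implicit function theorem argument. Direct inspection of~\eqref{lambdastar} and the definitions of $\Lambda_1$ and $\Lambda_2$ shows that $\Lambda_\star<\Lambda_{\rm FS}$ throughout the parameter range under consideration; since $\Lambda_{\rm FS}$ is by construction the precise value at which the Hessian of $\mathcal Q_\Lambda^\theta$ at $u_*$ first acquires a zero eigenvalue along a non-symmetric mode, this Hessian is strictly positive on non-symmetric perturbations at $\Lambda_\star$. The linearized Euler-Lagrange operator is therefore Fredholm and, modulo the one-dimensional kernel generated by $s$-translations, invertible. The implicit function theorem applied at $(\Lambda_\star,u_*)$ then provides a unique smooth local branch of positive critical points $\Lambda\mapsto u^*_\Lambda$ in a $C^2_{\rm loc}$-neighborhood of $u_*$, namely the symmetric one. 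For $n$ large, the convergence $u_n\to u_*$ puts $u_n$ inside this neighborhood, forcing $u_n=u^*_{\Lambda_n}$ up to translation and contradicting its non-symmetry.

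The main obstacle is the compactness step: translation invariance of $\mathcal C$ in the $s$-direction defeats a naive weak-to-strong passage, so one must carefully set up the concentration-compactness decomposition and enforce a centering compatible with the subsequential limit. A secondary technical point is the case $\theta<1$, in which the Euler-Lagrange system carries a Lagrange multiplier and the IFT must be applied on a suitably parametrized constrained family; the spectral condition $\Lambda_\star<\Lambda_{\rm FS}$ remains, however, the decisive criterion for invertibility.
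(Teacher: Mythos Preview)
Your overall strategy coincides with the paper's: take a sequence $\Lambda_n\downarrow\Lambda_{\rm s}$ with non-symmetric minimizers $u_n$, pass to a limit at $\Lambda_{\rm s}$, and exploit the non-degeneracy of the symmetric extremal for $\Lambda<\Lambda_{\rm FS}$ to derive a contradiction with Theorem~\ref{Thm:Main1}. The paper phrases the final step as ``the symmetric extremal is a strict local minimum in $\H^1(\mathcal C)$, hence the limit must be non-symmetric, contradicting Theorem~\ref{Thm:Main1} at $\Lambda_\star$'', whereas you phrase it as ``the limit is symmetric by Theorem~\ref{Thm:Main1}, then the implicit function theorem forces $u_n$ onto the symmetric branch''. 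These are two sides of the same coin: both rest on the same Hessian positivity below $\Lambda_{\rm FS}$, and the paper's route avoids the explicit IFT machinery you invoke.

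One correction: you assert that Theorem~\ref{Thm:Main1} is a rigidity statement for \emph{every positive solution} of the Euler--Lagrange system. It is not. The reparametrization in Proposition~\ref{Prop:propcond1} and Lemmas~\ref{Lem:First}--\ref{Lem:Second} feeds into Theorem~\ref{Thm:DEL2011} only under the energy bound $\mathcal Q_\mu^1[u]\le\K{CKN}^*(1,p,\mu)$, and that bound is established precisely for \emph{minimizers} of $\mathcal Q_\Lambda^\theta$. Fortunately this does not damage your argument: the limit $u_\infty$ of minimizers is itself a minimizer (by continuity of $\Lambda\mapsto\K{CKN}(\theta,p,\Lambda)$), so Theorem~\ref{Thm:Main1} still applies to it. Just be sure to justify step~5 via ``$u_\infty$ is a minimizer'' rather than via a rigidity claim that the paper does not establish.
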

It can be conjectured that $\Lambda_{\rm s}(\theta,p,d)=\Lambda_{\rm FS}(\theta,p,d)$ holds in the limit case $\theta=1$, and probably also for $\theta$ close enough to $1$, on the basis of the numerical results of \cite{Freefem} and the formal computations of \cite{DE2012}. On the other hand, it is known from \cite{springerlink:10.1007/s00526-011-0394-y} that $\Lambda_{\rm s}(\theta,p,d)<\Lambda_{\rm FS}(\theta,p,d)$ when $\theta-\vartheta(p,d)$ is small enough, at least for some values of $p$ and $d$.

The expressions involved in the statement of Theorem~\ref{Thm:Main1} look quite technical, but they are interesting for two reasons:
\begin{itemize}
\item[$\bullet$] Theorem~\ref{Thm:Main1} determines a range for symmetry which goes well beyond what can be achieved using standard methods and is somewhat unexpected in view of the estimate of \cite[Theorem~9]{DEL2011}. It is a striking observation that the reparametrization method which has been extensively used in \cite{Freefem,DE2012} allows us to extend to \hbox{$\theta<1$} results which were known only for $\theta=1$.
\item[$\bullet$] Even if they cannot be optimal as shown in Theorem~\ref{Thm:Main2}, the estimates of Theorem~\ref{Thm:Main1} are rather accurate from the numerical point of view, as will be illustrated in Section~\ref{Sec:Numerics}.
\end{itemize}

\medskip This paper is organized as follows. Section~\ref{Sec:Parametrization} is devoted to the reparametri\-zation and the proof of symmetry when $\Lambda\le\Lambda_1(\theta,p,d)$ in the subcritical case $\vartheta(p,d)<\theta<1$. To the price of some additional technicalities, the range $\Lambda\le\Lambda_2(\theta,p,d)$ and $\vartheta(p,\min\{3,d\})\le\theta<1$ is covered in Section~\ref{Sec:Lambda2}. The proofs of Theorems~\ref{Thm:Main1} and~\ref{Thm:Main2} are established in Section~\ref{Sec:Proofs}. The last section is devoted to an explicit approximation of $\Lambda_0$ and $\Lambda_2$, and some numerical results which illustrate Theorems~\ref{Thm:Main1} and~\ref{Thm:Main2}. The reader interested in the strategy of the proofs as well as the origin of the expressions of $\Lambda_1(\theta,p,d)$ and $\Lambda_2(\theta,p,d)$ is invited to read first Section~\ref{Sec:Parametrization} and the proof of Lemma~\ref{Lem:Second} in Section~\ref{Sec:Lambda2}.

\section{Reparametrization and a first symmetry result}\label{Sec:Parametrization}

We begin by a reparametrization of the branches of the solutions which allows us to reduce the case corresponding to $\theta<1$ and $\Lambda$ to the case corresponding to $\theta=1$ and some related $\mu$, as in Theorem~\ref{Thm:DEL2011}. Consider an optimal function~$u$ for~\eqref{Ineq:Gen_interp_Cylinder}, which therefore satisfies
\[
\K{CKN}(\theta,p,\Lambda)=\mathcal Q_\Lambda^\theta[u]=(t+\Lambda)^\theta\,\frac{\nrmC u2^2}{\nrmC up^2}\quad\mbox{with}\quad t:=\frac{\nrmC{\nabla u}2^2}{\nrmC u2^2}\ .
\]
According to \cite[Theorem1]{springerlink:10.1007/s00526-011-0394-y}, such a function $u$ exists for any $\theta>\vartheta(p,d)$. As a critical point of $\mathcal Q_\Lambda^\theta$, $u$ solves \eqref{EL} with
\[\label{muu}
\theta\,\mu=(1-\theta)\,t+\Lambda
\]
if it has been normalized by the condition
\[\label{normalization}
\nrmC{\nabla u}2^2+\Lambda\,\nrmC u2^2=\theta\,\nrmC up^p\ .
\]
Because of the zero-homogeneity of $\mathcal Q_\Lambda^\theta$, such a condition can be imposed without restriction and is equivalent to
\be{ElimL2}
\nrmC u2^2=\frac \theta{t+\Lambda}\,\nrmC up^p\ .
\ee
\begin{proposition}\label{Prop:propcond1} Let us assume that $u$ is a solution of \eqref{EL}, satisfying $\mathcal Q_\Lambda^\theta[u]=\K{CKN}(\theta,p,\Lambda)$ and \eqref{ElimL2}, with $\theta\,\mu=(1-\theta)\,t+\Lambda$. Then we have
\be{Cdt2}
\mathcal Q_\mu^1[u]\le\K{CKN}^*(1,p,\mu)\ .
\ee
\end{proposition}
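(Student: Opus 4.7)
My plan is to reduce the statement to an elementary one-variable inequality by testing the variational definition of $\K{CKN}(\theta,p,\Lambda)$ against the one-dimensional soliton at the \emph{reparametrized} mass $\mu$, rather than at $\Lambda$, and then exploiting Pohozaev-type identities both for $u$ and for the soliton.

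First I would extract two key identities from the Euler-Lagrange equation~\eqref{EL}. Multiplying~\eqref{EL} by $u$ and integrating on $\mathcal C$ yields $\|\nabla u\|_2^2+\mu\,\|u\|_2^2=\|u\|_p^p$; combined with the defining relation $\theta\,\mu=(1-\theta)\,t+\Lambda$ of $\mu$, this also gives $\|\nabla u\|_2^2+\Lambda\,\|u\|_2^2=\theta\,\|u\|_p^p$. Substituting into the definitions of the two Rayleigh quotients and eliminating $\|u\|_2^2$ produces
\begin{equation*}
\mathcal Q_\mu^1[u]=\|u\|_p^{p-2}\quad\text{and}\quad\mathcal Q_\Lambda^\theta[u]=\theta^\theta\,(t+\mu)^{\theta-1}\,\mathcal Q_\mu^1[u]\ .
\end{equation*}

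Next I would introduce the competitor $u_\mu$ obtained from~\eqref{ustar} by replacing $\Lambda$ with $\mu$, so that $u_\mu$ depends only on $s$ and solves $-u_\mu''+\mu\,u_\mu=u_\mu^{p-1}$ on $\R$. Multiplying this equation by $u_\mu$ and by $s\,u_\mu'$ and integrating (the one-dimensional Pohozaev trick) gives $\|u_\mu'\|_2^2=\alpha\,\|u_\mu\|_p^p$ and $\mu\,\|u_\mu\|_2^2=(1-\alpha)\,\|u_\mu\|_p^p$ with $\alpha:=(p-2)/(2p)$. In particular $\mathcal Q_\mu^1[u_\mu]=\K{CKN}^*(1,p,\mu)$, and an analogous substitution produces
\begin{equation*}
\mathcal Q_\Lambda^\theta[u_\mu]=\frac{1-\alpha}{\mu}\,(c_p\,\mu+\Lambda)^\theta\,\K{CKN}^*(1,p,\mu),\qquad c_p:=\tfrac{\alpha}{1-\alpha}=\tfrac{p-2}{p+2}\ .
\end{equation*}
Since $u$ minimizes $\mathcal Q_\Lambda^\theta$ we have $\mathcal Q_\Lambda^\theta[u]\le\mathcal Q_\Lambda^\theta[u_\mu]$; feeding in both identities, cancelling the common factor $\K{CKN}^*(1,p,\mu)$, using $\Lambda=\theta\,\mu-(1-\theta)\,t$, and setting $x:=t/\mu\ge 0$, the desired inequality~\eqref{Cdt2} collapses to the scalar statement
\begin{equation*}
(1+x)^{1-\theta}\bigl(c_p+\theta-(1-\theta)\,x\bigr)^{\theta}\le\theta^\theta(1+c_p)\ .
\end{equation*}

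The final step I would settle by a direct logarithmic derivative of the left-hand side, whose sign is that of $c_p-x$: the function has a unique critical point at $x=c_p$, which is its global maximum on the admissible range $0\le x<(c_p+\theta)/(1-\theta)$, and evaluating at $x=c_p$ gives exactly $\theta^\theta(1+c_p)$. Hence equality holds precisely when $t=c_p\,\mu$, that is when $u$ shares the kinetic-to-mass ratio of $u_\mu$. I do not foresee a serious obstacle; the true content of the argument is the choice of competitor at the reparametrized mass $\mu$ (rather than at $\Lambda$), which makes the Pohozaev bookkeeping close up and turns the problem into the one-variable maximization above.
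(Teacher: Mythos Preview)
Your argument is correct. The route differs from the paper's in one clean way: the paper bounds $\mathcal Q_\Lambda^\theta[u]$ by $\K{CKN}^*(\theta,p,\Lambda)$ and then invokes the explicit formula~\eqref{EqnK} to compute the ratio $\K{CKN}^*(\theta,p,\Lambda)/\K{CKN}^*(1,p,\mu)$, whereas you sidestep~\eqref{EqnK} entirely by testing $\mathcal Q_\Lambda^\theta$ against the one-dimensional soliton $u_\mu$ at the \emph{reparametrized} mass and reading off $\mathcal Q_\Lambda^\theta[u_\mu]$ from the Pohozaev identities. Both approaches land on the same one-variable maximization: your critical point $x=c_p$, i.e.\ $t=c_p\,\mu$, is exactly the paper's maximizer $t_0=\Lambda\,\big(\tfrac{2p\theta}{p-2}-1\big)^{-1}$ once one uses $\Lambda=\theta\mu-(1-\theta)t$. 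What your choice of competitor buys is self-containment (no need for the closed form of $\K{CKN}^*(\theta,p,\Lambda)$) and a transparent interpretation of the equality case as the soliton kinetic-to-mass ratio; what the paper's version buys is that the bound $\mathcal Q_\Lambda^\theta[u]\le\K{CKN}^*(\theta,p,\Lambda)$ is already stated in~\eqref{K-Kstar}, so no competitor needs to be introduced explicitly.
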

\begin{proof} From \eqref{K-Kstar} we know that
\[
(t+\Lambda)^\theta\,\frac{\nrmC u2^2}{\nrmC up^2}\le\K{CKN}^*(\theta,p,\Lambda)\ .
\]
Using \eqref{ElimL2}, we rewrite this estimate as
\[
\theta\,(t+\Lambda)^{\theta-1}\,\|u\|_p^{p-2}\le\K{CKN}^*(\theta,p,\Lambda)\ .
\]
Using \eqref{ElimL2} again and the expression of $\mu$, we obtain 
\[
Q^1_{\mu}[u]=\frac{\theta\,(t+\mu)}{t+\Lambda}\,\|u\|_p^{p-2}=\|u\|_p^{p-2}\le f(t,\theta,\Lambda,p)\,\K{CKN}^*(1,p,\mu)
\]
with
\[
f(t,\theta,\Lambda,p):=\frac1{\theta\,(t+\Lambda)^{\theta-1}}\,\frac{\K{CKN}^*(\theta,p,\Lambda)}{\K{CKN}^*(1,p,\mu)}\ .
\]
Using the expression of $\mu$ and~\eqref{EqnK}, we find that
\[
f(t,\theta,\Lambda,p)=\tfrac{(p+2)^\frac{p+2}{2\,p}}{(2\,p)^{1-\theta}}\,\big(\tfrac{\Lambda\,\theta}{2+(2\,\theta-1)\,p}\big)^{\theta-\frac{p-2}{2\,p}}\,(t+\Lambda)^{1-\theta}\,\big((1-\theta)\,t+\Lambda\big)^{-\frac{p+2}{2\,p}}
\]
achieves its maximum at $t_0:=\Lambda\,\big(\frac{2\,p\,\theta}{p-2}-1\big)^{-1}>0$. Hence $f(t)\le f(t_0)=1$, which concludes the proof.
\end{proof}

Using the notations~\eqref{m5} and~\eqref{qstar}, we obtain our first symmetry result, which goes as follows.
\begin{lemma}\label{Lem:First} Suppose that either $d =2$ and $p\in(2,6)$ or else $d\ge 3$, $p\in(2,2^*)$. If $\theta\in(\vartheta(p,d),1)$ and
\[
\Lambda\le\min\left\{\Lambda_\star(1,q,d),\frac{\theta\,\Lambda_\star(1,p,d)}{(1-\theta)\,\Pi^*(\theta,p,q)+\theta}\right\}
\]
for some $q\in\big(q^*(\theta,p),6)$ when $d=2$, or for some $q\in\big(q^*(\theta,p),2^*\big)$ when $d \ge 3$, then any optimal function for \eqref{Ineq:Gen_interp_Cylinder} is symmetric.\end{lemma}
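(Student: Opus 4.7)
The plan is to reduce the $\theta<1$ case to the $\theta=1$ rigidity result of Theorem~\ref{Thm:DEL2011} applied to the same function $u$. Given an optimal $u$ for~\eqref{Ineq:Gen_interp_Cylinder} at parameters $(\theta,p,\Lambda)$, normalized as before~\eqref{ElimL2}, it solves \eqref{EL} with $\mu=\bigl((1-\theta)\,t+\Lambda\bigr)/\theta$ and by Proposition~\ref{Prop:propcond1} it already satisfies $\mathcal Q_\mu^1[u]\le\K{CKN}^*(1,p,\mu)$. Thus it is enough to show that the hypothesis on $\Lambda$ forces $\mu\le\Lambda_\star(1,p,d)$; Theorem~\ref{Thm:DEL2011} will then deliver that $u$ depends only on $s$. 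Equivalently, writing $\theta\,\mu=(1-\theta)(t+\Lambda)+\theta\,\Lambda$, we need to bound $t+\Lambda$ from above by $\Pi^*(\theta,p,q)\,\Lambda$.

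To obtain such an upper bound, I will use the auxiliary exponent $q\in(q^*(\theta,p),2^*)$ (or $q\in(q^*,6)$ if $d=2$) to interpolate between $\L^2$ and $\L^q$. By H\"older's inequality on $(\mathcal C,d\mu)$ with $\lambda:=q(p-2)/(p(q-2))\in(0,\theta)$,
\[
\nrmC up^2\le\nrmC u2^{2(1-\lambda)}\,\nrmC uq^{2\lambda}.
\]
The optimality identity $(t+\Lambda)^\theta\,\nrmC u2^2=\K{CKN}(\theta,p,\Lambda)\,\nrmC up^2$ then yields
\[
(t+\Lambda)^{\theta/\lambda}\,\nrmC u2^2\le\K{CKN}(\theta,p,\Lambda)^{1/\lambda}\,\nrmC uq^2.
\]
Next, since $\Lambda\le\Lambda_\star(1,q,d)$, Theorem~\ref{Cor:DEL2011} applied at parameters $(1,q,\Lambda)$ gives $\K{CKN}(1,q,\Lambda)=\K{CKN}^*(1,q,\Lambda)$, so $\nrmC uq^2\le(t+\Lambda)\,\nrmC u2^2/\K{CKN}^*(1,q,\Lambda)$. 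Combining the two estimates, dividing out $(t+\Lambda)\,\nrmC u2^2$, and using the trivial bound $\K{CKN}(\theta,p,\Lambda)\le\K{CKN}^*(\theta,p,\Lambda)$ produces
\[
(t+\Lambda)^{\theta-\lambda}\le\frac{\K{CKN}^*(\theta,p,\Lambda)}{\K{CKN}^*(1,q,\Lambda)^{\lambda}}.
\]

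The remaining task is an explicit scaling computation. Using the formula~\eqref{EqnK}, both $\K{CKN}^*(\theta,p,\Lambda)$ and $\K{CKN}^*(1,q,\Lambda)^\lambda$ factor as an explicit constant times a power of $\Lambda$; a direct check shows these powers match and yield an overall exponent $\theta-\lambda$, with the constant being exactly $\Pi^*(\theta,p,q)^{\theta-\lambda}$ in view of the definition~\eqref{m5}. Since $\theta>\lambda$, extracting the $(\theta-\lambda)$-th root gives $t+\Lambda\le\Pi^*(\theta,p,q)\,\Lambda$, hence
\[
\theta\,\mu=(1-\theta)(t+\Lambda)+\theta\,\Lambda\le\bigl((1-\theta)\,\Pi^*(\theta,p,q)+\theta\bigr)\Lambda.
\]
The second part of the hypothesis on $\Lambda$ is precisely what forces the right-hand side to be $\le\theta\,\Lambda_\star(1,p,d)$, and then Theorem~\ref{Thm:DEL2011} applied to $u$ completes the argument.

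The main obstacle is essentially bookkeeping: one has to verify that the exponent-matching calculation in the scaling step produces exactly $\theta-\lambda$ and that the resulting multiplicative constant is precisely $\Pi^*(\theta,p,q)^{\theta-\lambda}$. The inequality $\theta>\lambda$ (equivalent to $q>q^*(\theta,p)$, as noted after~\eqref{qstar}) is what allows one to take the $(\theta-\lambda)$-th root in the correct direction, and is therefore essential for the argument.
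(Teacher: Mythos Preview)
Your proof is correct and follows essentially the same approach as the paper's: H\"older interpolation between $\L^2$ and $\L^q$, application of Theorem~\ref{Cor:DEL2011} at $(1,q,\Lambda)$ (valid because $\Lambda\le\Lambda_\star(1,q,d)$), the scaling identity giving $(t+\Lambda)^{\theta-\lambda}\le(\Pi^*(\theta,p,q)\,\Lambda)^{\theta-\lambda}$, and then Theorem~\ref{Thm:DEL2011} via Proposition~\ref{Prop:propcond1}. The only cosmetic differences are that your $\lambda$ is the paper's $1-\delta$, and the paper first writes the estimate with a free auxiliary parameter before specializing it to $\Lambda$, whereas you make that choice immediately.
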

\begin{proof} Let $u$ be a solution as in Proposition~\ref{Prop:propcond1}. From~\eqref{K-Kstar}, we know that
\[
\K{CKN}^*(\theta,p,\Lambda)\ge(t+\Lambda)^\theta\,\frac{\nrmC u2^2}{\nrmC up^2}\ .
\]
For $p < q < \min\{6, 2^* \}$ we have by
H\"older's inequality, $\nrmC up\le\nrmC u2^\delta\,\nrmC uq^{1-\delta}$ provided $\delta=\frac2p\,\frac{q-p}{q-2}$, and thus $1-\delta=\frac qp\,\frac{p-2}{q-2}$. Hence
\[
\K{CKN}^*(\theta,p,\Lambda)\ge(t+\Lambda)^\theta\,\(\frac{\nrmC u2^2}{\nrmC uq^2}\)^{1-\delta}\ .
\]
Now, for any $\lambda\in(0,\Lambda_\star(1,q,d)]$, we know from Theorem~\ref{Cor:DEL2011} that
\[\label{condlambda}
\nrmC uq^2\le\frac{\nrmC{\nabla u}2^2+\lambda\,\nrmC u2^2}{\K{CKN}^*(1,q,\lambda)}\ ,
\]
which shows that
\[
\K{CKN}^*(\theta,p,\Lambda)\ge(t+\Lambda)^\theta\,\(\frac{\K{CKN}^*(1,q,\lambda)}{t+\lambda}\)^{1-\delta}\ .
\]
Summarizing, we have found that
\be{m000}
\frac{(t+\Lambda)^\theta}{(t+\lambda)^{1-\delta}}\le\frac{\K{CKN}^*(\theta,p,\Lambda)}{(\K{CKN}^*(1,q,\lambda))^{1-\delta}}\quad\mbox{if}\quad\lambda\le\Lambda_\star(1,q,d)\ .
\ee

Next we can make the ansatz $\lambda=\Lambda$. Provided $\Lambda\le\Lambda_\star(1,q,d)$, we get that
\[
(t+\Lambda)^{\theta+\delta-1}\le\frac{\K{CKN}^*(\theta,p,\Lambda)}{(\K{CKN}^*(1,q,\Lambda))^{1-\delta}}=\big(\Pi^*(\theta,p,q)\,\Lambda\big)^{\theta+\delta-1}\ ,
\]
so that $t\le(\Pi^*(\theta,p,q)-1)\,\Lambda$. According to Theorem~\ref{Thm:DEL2011}, $u$ is symmetric if 
\be{Cdt1}
\frac1\theta\,\big((1-\theta)\,t+\Lambda\big)=\mu\le\Lambda_\star(1,p,d)\ ,
\ee
because \eqref{Cdt2} holds by Proposition~\ref{Prop:propcond1}. This completes the proof. \end{proof}

In the next section we shall consider an alternative ansatz for which $\lambda\neq\Lambda$.

\section{Another symmetry result}\label{Sec:Lambda2}

In this section we establish an estimate similar to the one of Lemma²~\ref{Lem:First} but based on a different ansatz, which moreover covers the critical case $\theta=\vartheta(p,d)$. We recall that $\beta=\beta(\theta,p)=1-\frac{p-2}{2\,p\,\theta}$ has been defined in \eqref{beta}. The proof is slightly more technical than the one of Lemma²~\ref{Lem:First}. We start with an auxiliary result.
\begin{lemma}\label{Lem:xstar} For any $\mathsf N>1$, $p<6$ and $\theta\in(\vartheta(p,3),1)$, if $\beta=\beta(\theta,p)$ is given by~\eqref{beta}, the equation
\[
\theta\,(6-p)\,\big(x^\beta-\mathsf N\big)\,x-\big(2\,p\,\theta-3\,(p-2)\big)\(\theta\,\big(x^\beta-\mathsf N\big)+(1-\theta)\,(x-1)\,\mathsf N \) =0 \ ,
\]
has a unique root in the interval $(\mathsf N^{1/\beta},\infty)$.
\end{lemma}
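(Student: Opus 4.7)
I would set $F(x)$ equal to the left-hand side of the equation and study it on $(0,\infty)$, with the goal of establishing three facts from which existence and uniqueness follow cleanly: $F(\mathsf N^{1/\beta})<0$, $F(x)\to+\infty$ as $x\to\infty$, and $F$ is strictly convex. Existence of a root in $(\mathsf N^{1/\beta},\infty)$ then follows from the intermediate value theorem, while uniqueness follows from strict convexity by a three-point inequality.

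\textbf{Boundary and asymptotic values.} At $x=\mathsf N^{1/\beta}$ the factor $x^\beta-\mathsf N$ vanishes, so
\[
F(\mathsf N^{1/\beta})=-\bigl(2\,p\,\theta-3\,(p-2)\bigr)(1-\theta)\,\mathsf N\,\bigl(\mathsf N^{1/\beta}-1\bigr)<0,
\]
the sign being negative because $\mathsf N>1$, $\theta<1$, and $2\,p\,\theta-3\,(p-2)>0$ under the hypothesis $\theta>\vartheta(p,3)$. Expanding $F$ as a linear combination of $x^{\beta+1}$, $x^\beta$, $x$ and a constant reveals that the leading term is $\theta\,(6-p)\,x^{\beta+1}$, which has positive coefficient (since $p<6$) and exponent $\beta+1>1$; hence $F(x)\to+\infty$ as $x\to+\infty$.

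\textbf{Strict convexity.} Differentiating twice yields
\[
F''(x)=\beta\,x^{\beta-2}\Bigl[\theta\,(6-p)\,(\beta+1)\,x+\bigl(2\,p\,\theta-3\,(p-2)\bigr)\,\theta\,(1-\beta)\Bigr].
\]
Under $p\in(2,6)$ and $\theta\in(\vartheta(p,3),1)$ one has $6-p>0$, $2\,p\,\theta-3\,(p-2)>0$, and $\beta=1-\tfrac{p-2}{2\,p\,\theta}\in(2/3,1)$. In particular $\beta>0$ and $1-\beta>0$, so the bracket is strictly positive for every $x>0$ and $F''>0$ on $(0,\infty)$.

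\textbf{Uniqueness and main obstacle.} If $\mathsf N^{1/\beta}<x_1<x_2$ were two roots, writing $x_1=\alpha\,\mathsf N^{1/\beta}+(1-\alpha)\,x_2$ with $\alpha\in(0,1)$ and applying strict convexity gives
\[
0=F(x_1)<\alpha\,F(\mathsf N^{1/\beta})+(1-\alpha)\,F(x_2)=\alpha\,F(\mathsf N^{1/\beta})<0,
\]
a contradiction. The only mildly delicate point is tracking signs in the second derivative when $\beta-1<0$ (so the $x^\beta$ term contributes a \emph{positive} multiple of $x^{\beta-2}$ after differentiation); this is precisely where the hypothesis $\theta>\vartheta(p,3)$, rather than the weaker $\theta>\vartheta(p,d)$ for general $d$, is needed, as it simultaneously forces $\beta>2/3>0$ and positivity of both terms in the bracketed factor.
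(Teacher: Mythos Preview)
Your proof is correct and follows essentially the same route as the paper: both set $f(x)$ equal to the left-hand side, check that $f(\mathsf N^{1/\beta})<0$, and compute the second derivative to obtain strict convexity on the relevant interval (your formula for $F''$ agrees with the paper's after factoring out $\theta$). The only difference is cosmetic: the paper additionally computes $f'(\mathsf N^{1/\beta})\ge 2(p-2)(1-\theta)\mathsf N>0$ and concludes that $f$ is increasing on $(\mathsf N^{1/\beta},\infty)$, whereas you bypass the first derivative by noting the leading term $\theta(6-p)\,x^{\beta+1}$ forces $F\to+\infty$ and then extract uniqueness from convexity via the three-point inequality.
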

\noindent When $\mathsf N= \mathsf N(\theta, p)>1$ is given by~\eqref{Ntp}, we denote this root by $\mathsf x^* = \mathsf x^*(\theta, p)$.
\begin{proof} Consider the function
\[
f(x):=\textstyle\theta\,(6-p)\big(x^\beta-\mathsf N\big)\,x-\big(2\,p\,\theta-3\,(p-2)\big)\!\left[\theta\big(x^\beta\!-\!\mathsf N\big)+(1\!-\!\theta)(x\!-\!1)\mathsf N \right] ,
\]
and notice first that $f(\mathsf N^{1/\beta})<0$ because $\theta>\vartheta(p,3)$ and $\mathsf N^{1/\beta}>1$. Next we observe that $\alpha:=2\,p\,\theta-3\,(p-2)=2\,p\,\big(\theta-\vartheta(p,3)\big)=6-p-2\,p\,(1-\theta)$ and compute
\[
f'(x) = (6-p)\,\theta\,\left[ (1+\beta)\,x^{\beta} - \mathsf N \right] - 2\,p\,\big(\theta-\vartheta(p,3)\big)\!\left[ \beta\,\theta\, x^{\beta-1} + (1-\theta)\,\mathsf N \right]
\]
and
\[
f^{''}(x) =\beta\,\theta\,x^{\beta-2}\,\left[(6-p)\,(1+\beta)\,x-(\beta-1)\big(6-p-2\,p\,(1-\theta)\big)\right]>0
\]
for any $x>1$. Using the fact that $\mathsf N > 1$, we find that 
\[
f'(\mathsf N^{1/\beta}) \geq 2\,(p-2)\,(1-\theta)\,\mathsf N >0 \ .
\]
It follows that the function $f(x)$ is increasing and convex for $x > \mathsf N^{1/\beta}$. Since $f(\mathsf N^{1/\beta})<0$ we conclude that $f(x)$ has a unique root for $x > \mathsf N^{1/\beta}$.
\end{proof} 

When $\mathsf N= \mathsf N(\theta, p)$ we only need to check that $\mathsf N(\theta, p)>1$. This is shown in Lemma ~\ref{Cor:10}. Before, we need a preliminary estimate. Consider the \emph{Digamma function} $\psi(z)=\frac{\Gamma'(z)}{\Gamma(z)}$.
\begin{lemma}\la{psi} For all $z>0$, we have
\[
\frac1{2\,z}<\psi\(z+\tfrac12\)-\psi(z)<\ln\(1+\tfrac1{2\,z}\)+\frac1{z}-\frac2{2\,z+1}\ .
\]
\end{lemma}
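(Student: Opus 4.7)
\emph{Proof proposal.} My strategy is to recast both sides of the inequality as Laplace-type integrals in a common variable, so that the comparison reduces to a single pointwise estimate on the integrand.

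For the lower bound, I would invoke the classical identity $\psi(b)-\psi(a) = \int_0^1 \frac{t^{a-1}-t^{b-1}}{1-t}\,dt$. Taking $a = z$, $b = z+\tfrac12$ and making the change of variable $t = u^2$ yields
\[
\psi\bigl(z+\tfrac12\bigr)-\psi(z) \;=\; 2\int_0^1\frac{u^{2z-1}}{1+u}\,du.
\]
Since $\tfrac{1}{1+u} > \tfrac12$ on $[0,1)$, this integral is strictly larger than $\int_0^1 u^{2z-1}\,du = \tfrac{1}{2z}$, which immediately establishes the lower bound.

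For the upper bound, I would pass to the Laplace-type representation
\[
\psi\bigl(z+\tfrac12\bigr)-\psi(z) \;=\; 2\int_0^\infty\frac{e^{-2zt}}{1+e^{-t}}\,dt,
\]
obtained from the series $\psi(z+\tfrac12)-\psi(z) = \sum_{n\ge 0}\bigl(\tfrac{1}{n+z}-\tfrac{1}{n+z+1/2}\bigr)$ by inserting $1/a = \int_0^\infty e^{-as}\,ds$ and summing a geometric series in $n$. In parallel, writing $\tfrac{1}{z} = 2\int_0^\infty e^{-2zt}\,dt$, $\tfrac{2}{2z+1} = 2\int_0^\infty e^{-(2z+1)t}\,dt$, and $\ln\bigl(1+\tfrac{1}{2z}\bigr) = \int_{2z}^{2z+1}\tfrac{dx}{x} = \int_0^\infty e^{-2zt}\,\tfrac{1-e^{-t}}{t}\,dt$, the right-hand side combines into
\[
\ln\bigl(1+\tfrac{1}{2z}\bigr) + \tfrac{1}{z} - \tfrac{2}{2z+1} \;=\; \int_0^\infty e^{-2zt}\,(1-e^{-t})\Bigl(\tfrac{1}{t}+2\Bigr)\,dt.
\]

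The upper bound thus reduces to the pointwise estimate $(1-e^{-t})(\tfrac{1}{t}+2) > \tfrac{2}{1+e^{-t}}$ on $(0,\infty)$, since then integrating against the positive weight $e^{-2zt}\,dt$ gives the strict inequality. Multiplying through by $1+e^{-t}>0$ and clearing, this is equivalent to $(1+2t)e^{-2t} < 1$ for $t>0$. Setting $h(t) := (1+2t)e^{-2t}$, one has $h(0)=1$ and $h'(t) = -4t\,e^{-2t} < 0$, so $h$ is strictly decreasing on $(0,\infty)$ and the claim follows. The main (admittedly modest) obstacle is spotting the precise combination $\ln(1+\tfrac{1}{2z}) + \tfrac{1}{z} - \tfrac{2}{2z+1}$ that collapses into the clean kernel $(1-e^{-t})(\tfrac{1}{t}+2)$; once this representation is in hand, the entire estimate reduces to the one-line monotonicity of $(1+2t)e^{-2t}$.
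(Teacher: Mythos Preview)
Your proof is correct. For the lower bound it coincides with the paper's argument: both use the integral representation of $\psi(z+\tfrac12)-\psi(z)$ and the trivial bound $\tfrac{1}{1+e^{-t/2}}>\tfrac12$ (your substitution $t=u^2$ in the $[0,1]$-integral is just a change of variable away from this).

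For the upper bound you take a genuinely different route. The paper keeps $\ln(1+\tfrac{1}{2z})$ as is, rewrites only $\tfrac1z$ as $\int_0^\infty e^{-zt}\,dt$, and then splits $\tfrac{2}{2z+1}$ into two halves, bounding one by the residual integral $\int_0^\infty \tfrac{e^{-(z+1/2)t}}{1+e^{-t/2}}\,dt>\tfrac{1}{2z+1}$ and the other by the elementary inequality $\ln(1+\tfrac{1}{2z})>\tfrac{1}{2z+1}$. You instead also express $\ln(1+\tfrac{1}{2z})$ as a Laplace integral (Frullani), so that the whole right-hand side becomes $\int_0^\infty e^{-2zt}(1-e^{-t})(\tfrac1t+2)\,dt$ and the comparison collapses to the single pointwise estimate $(1+2t)e^{-2t}<1$. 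Your approach is more unified and arguably cleaner; the paper's has the minor advantage that it avoids the Frullani representation and makes the two sources of slack visible separately.
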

\begin{proof}We use the following representation formula (\emph{cf.} \cite[\textsection~6.3.21, p.~Ê259]{AS}):
\[
\psi(z)=\int_0^\infty\(\frac{e^{-t}}t-\frac{e^{-z\,t}}{1-e^{-t}}\)\,dt
\]
and elementary manipulations to get the lower bound
\[
\psi\(z+\tfrac12\)-\psi(z)=\int_0^\infty \frac{e^{-z\,t}}{1+e^{-t/2}}\;dt>\frac12\int_0^\infty e^{-z\,t}\;dt=\frac1{2\,z}\ .
\]
As for the upper bound, we have the equivalences
\bean
&&\ln\(1+\tfrac1{2\,z}\)+\frac1{z}-\frac2{2\,z+1}-\int_0^\infty \frac{e^{-z\,t}}{1+e^{-t/2}}\;dt>0\\
&&\Longleftrightarrow
\ln\(1+\tfrac1{2\,z}\)+\int_0^\infty e^{-z\,t}\;dt-\frac2{2\,z+1}-
\int_0^\infty \frac{e^{-z\,t}}{1+e^{-t/2}}\;dt>0\\
&&\Longleftrightarrow
\ln\(1+\tfrac1{2\,z}\)+\int_0^\infty \frac{e^{-t/2}\,e^{-z\,t}}{1+e^{-t/2}}\;dt-\frac2{2\,z+1}>0\ .
\eean
The result follows from
\[
\int_0^\infty \frac{e^{-(z+\frac12)\,t}}{1+e^{-t/2}}\;dt>\frac12\int_0^\infty e^{-(z+\frac12)\,t}\;dt=\frac1{2\,z+1}
\]
and, by monotonicity of the function $z\mapsto\ln\(1+\tfrac1{2\,z}\)-\frac1{2\,z+1}$,
\[\label{le}
\ln\(1+\tfrac1{2\,z}\)>\frac1{2\,z+1}\ .
\]
\end{proof} 

\begin{lemma}\label{Cor:10} Assume that $2<p<6$ and $\vartheta(p,2)<\theta\le1$. Then the function $\theta\mapsto\mathsf N(\theta,p)$ is decreasing and $\mathsf N(1,p)=1$.\end{lemma}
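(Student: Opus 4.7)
The plan is to analyze $\mathsf N(\theta,p)$ directly by substituting the closed-form formula~\eqref{EqnK} into the definition~\eqref{Ntp}. The boundary value $\mathsf N(1,p)=1$ is immediate from~\eqref{qstar}: at $\theta=1$, $q^*(1,p)=p$, so numerator and denominator of~\eqref{Ntp} coincide.

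For the monotonicity I would introduce the affine change of variable $a:=\tfrac{p\,\theta}{p-2}$, which is increasing in $\theta$, with $a=1$ at $\theta=\vartheta(p,2)$ and $a=\tfrac{p}{p-2}$ at $\theta=1$. Short algebraic manipulations yield the identities $q^*=\tfrac{2a}{a-1}$, $\tfrac{2}{q^*-2}=a-1$, $\tfrac{q^*-2}{2q^*}=\tfrac{1}{2a}$, $\tfrac{q^*+2}{4}=\tfrac{2a-1}{2(a-1)}$, $\tfrac{6-q^*}{2q^*}=\tfrac{2a-3}{2a}$, $\tfrac{2p\theta+2-p}{(p-2)^2}=\tfrac{2a-1}{p-2}$, and $\tfrac{p-2}{p\theta}=\tfrac{1}{a}$. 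Plugging these into~\eqref{EqnK} for both factors of~$\mathsf N(\theta,p)$, the common factor $\tfrac{2a}{2a-1}$ cancels between $(\mathsf K_{\rm CKN}^*(\theta,p,1))^{1/\theta}$ and $\mathsf K_{\rm CKN}^*(1,q^*,1)$, leaving
\begin{multline*}
\log\mathsf N(\theta,p)=\tfrac{1}{2a}\log\tfrac{2}{(p-2)(a-1)}+\tfrac{6-p}{2a(p-2)}\log\tfrac{p+2}{4}\\
-\tfrac{2a-3}{2a}\log\tfrac{2a-1}{2(a-1)}+\tfrac{1}{a}\log\tfrac{\Gamma(b)\,\Gamma(a-\frac12)}{\Gamma(b+\frac12)\,\Gamma(a-1)}
\end{multline*}
with $b:=\tfrac{2}{p-2}$. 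At $a=b+1$ the first and fourth summands vanish individually (using $(p-2)(a-1)=2$ and $a-1=b$) while the second and third cancel each other (they involve $\log\tfrac{p+2}{4}$ with opposite coefficients $\pm\tfrac{6-p}{2p}$), which reconfirms $\mathsf N(1,p)=1$.

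Since $\tfrac{da}{d\theta}>0$, the monotonicity reduces to showing $\tfrac{d}{da}\log\mathsf N<0$ on $(1,b+1]$. The derivative of the algebraic and logarithmic part is elementary; the Gamma term contributes $-\tfrac{1}{a^2}\log\tfrac{\Gamma(b)\Gamma(a-1/2)}{\Gamma(b+1/2)\Gamma(a-1)}+\tfrac{1}{a}\bigl[\psi(a-\tfrac12)-\psi(a-1)\bigr]$. I would then apply the upper bound of Lemma~\ref{psi} with $z=a-1$, namely $\psi(a-\tfrac12)-\psi(a-1)<\log\tfrac{2a-1}{2(a-1)}+\tfrac{1}{a-1}-\tfrac{2}{2a-1}$, and handle the Gamma logarithm via the representation $\log\tfrac{\Gamma(a-1/2)\Gamma(b)}{\Gamma(a-1)\Gamma(b+1/2)}=\int_b^{a-1}\bigl[\psi(s+\tfrac12)-\psi(s)\bigr]\,ds$ with the same upper bound inside the integral. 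This reduces $\tfrac{d}{da}\log\mathsf N$ to an explicit combination of rational and logarithmic expressions in $a$ and $p$ alone.

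The main obstacle is to verify that this explicit upper bound is non-positive on $(1,b+1]$, with strict negativity. A direct check at $a=b+1$ is encouraging: the rational residual $-\tfrac{(p-2)^3}{2p(p+2)}$ is matched exactly by $\tfrac{p-2}{p}\bigl[\tfrac{p-2}{2}-\tfrac{2(p-2)}{p+2}\bigr]$ coming from Lemma~\ref{psi}, so the upper bound collapses to $0$; the strict inequality in Lemma~\ref{psi} then yields $\tfrac{d}{da}\log\mathsf N\big|_{a=b+1}<0$. Extending this to $1<a<b+1$ requires careful bookkeeping after factoring out a common positive denominator, but the sharpness of Lemma~\ref{psi} at the endpoint strongly suggests that exactly the right amount of slack is available throughout the interval.
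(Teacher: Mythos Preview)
Your setup and the formula for $\log\mathsf N(\theta,p)$ in terms of $a=\tfrac{p\theta}{p-2}$ are correct, and the boundary value $\mathsf N(1,p)=1$ is fine. But the argument is explicitly left unfinished: you admit that the sign of your upper bound for $\tfrac{d}{da}\log\mathsf N$ on the whole interval $(1,b+1]$ ``requires careful bookkeeping'' and only verify it at the endpoint $a=b+1$. That is precisely the heart of the lemma, so as written this is a genuine gap, not a routine omission. In particular, your route forces you to control the term $-\tfrac{1}{a^2}\log\tfrac{\Gamma(b)\Gamma(a-1/2)}{\Gamma(b+1/2)\Gamma(a-1)}$ via an integrated version of Lemma~\ref{psi}; the resulting explicit bound is messy and its sign away from $a=b+1$ is not at all obvious.

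The paper avoids this difficulty by a single change of viewpoint: instead of differentiating $\log\mathsf N$, it differentiates $G:=\mathsf N^\theta$, i.e.\ it computes $\tfrac{\partial}{\partial\theta}(\theta\log\mathsf N)$ (equivalently, in your variable, $\tfrac{d}{da}(a\log\mathsf N)$). Multiplying by $\theta$ wipes out all the $1/a$ prefactors in your formula, so the $-\tfrac{1}{a^2}\log(\Gamma\!\cdots)$ term never appears and only a \emph{single} digamma difference $\psi(a-\tfrac12)-\psi(a-1)$ survives. One application of the upper bound in Lemma~\ref{psi} then makes the estimate collapse to \emph{exactly} $0$ for every admissible $\theta$, not just at $\theta=1$; no interval-wide bookkeeping is needed. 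From $\partial_\theta G<0$ and $G(1)=1$ one gets $G>1$, hence $\mathsf N>1$, for $\theta<1$; the identity $\tfrac{1}{G}\partial_\theta G=\log\mathsf N+\tfrac{\theta}{\mathsf N}\,\partial_\theta\mathsf N<0$ then yields $\partial_\theta\mathsf N<0$ immediately. So the missing idea is simply to work with $\mathsf N^\theta$ rather than $\mathsf N$.
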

\begin{proof} $\mathsf N(1,p)=1$ is a consequence of the definition of $\mathsf N$. Using the precise value pf $ \K{CKN}^*(\theta,p,\Lambda) $, we obtain the following explicit expression of the function $\mathsf N(\theta,p)$, namely
\[
\textstyle\(\frac2{2-p\,(1-\theta)}\)^{\!\frac{p-2}{2\,p\,\theta}}\(\frac{p+2}4\)^{\!\frac{6-p}{2\,p\,\theta}} \(\frac{2\,(2-p\,(1-\theta))}{(2\,\theta-1)\,p+2}\)^{\!\frac{2\,p\,\theta-3\,(p-2)}{2\,p\,\theta}}\left[\frac{\Gamma\(\frac2{p-2}\)\,\Gamma\(\frac{2-p\,(1-\theta)}{p-2}+\frac12\)}{\Gamma\(\frac2{p-2}+\frac12\)\,\Gamma\(\frac{2-p\,(1-\theta)}{p-2}\)}\right]^{\frac{p-2}{p\,\theta}}\kern-6pt.
\]
Let us define $G:=\mathsf N^\theta$ and compute
\bean
\frac1G\,\frac{\partial G}{\partial\theta}=\frac{p\,\theta-2\,(p-2)}{2-p\,(1-\theta)}-\frac{2\,p\,\theta-3\,(p-2)}{(2\,\theta-1)\,p+2}+\ln\(\frac{2\,(2-p\,(1-\theta))}{(2\,\theta-1)\,p+2}\)\\
+\frac{\Gamma'\(\frac{2-p\,(1-\theta)}{p-2}+\frac12\)}{\Gamma\(\frac{2-p\,(1-\theta)}{p-2}+\frac12\)}-\frac{\Gamma'\(\frac{2-p\,(1-\theta)}{p-2}\)}{\Gamma\(\frac{2-p\,(1-\theta)}{p-2}\)}\ .
\eean
By Lemma~\ref{psi} we get that
\bean
\frac1G\,\frac{\partial G}{\partial\theta}<\frac{p\,\theta-2\,(p-2)}{2-p\,(1-\theta)}-\frac{2\,p\,\theta-3\,(p-2)}{(2\,\theta-1)\,p+2}+\ln\(\frac{2\,(2-p\,(1-\theta))}{(2\,\theta-1)\,p+2}\)\\
+\ln\(\frac{(2\,\theta-1)\,p+2}{2\,(2-p\,(1-\theta))}\)+\frac{p-2}{2-p\,(1-\theta)}-\frac{2\,(p-2)}{(2\,\theta-1)\,p+2}=0\ .
\eean
Since
\[
\frac1G\,\frac{\partial G}{\partial\theta}=\ln \mathsf N+\frac\theta{\mathsf N} \frac{\partial \mathsf N}{\partial\theta}<0\ ,
\]
for $\theta\in(\vartheta(p,2), 1]$ and $\mathsf N(1,p)=1$, it follows that $\frac\partial{\partial\theta}\mathsf N<0$.
\end{proof}

After these preliminaries, we can now state the main result of this section.
\begin{lemma}\label{Lem:Second} Assume that
\begin{eqnarray*}
&&2<p<6\quad\mbox{and}\quad\vartheta(p,3)<\theta<1\quad\mbox{if}\quad d=2\;\mbox{ or }\;3\ ,\\
&&2<p<2^*\quad\mbox{and}\quad\vartheta(p,d)\le\theta<1\quad\mbox{if}\quad d\ge4\ .
\end{eqnarray*}
Then any optimal function for \eqref{Ineq:Gen_interp_Cylinder} is symmetric if $\Lambda<\Lambda_2(\theta,p,d)$.
Moreover, we have $\lim_{\theta\to1}\Lambda_2(\theta,p,d)=\Lambda_\star(1,p,d)$.\end{lemma}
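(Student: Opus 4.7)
The plan is to mimic Lemma~\ref{Lem:First}, but at the boundary case $q = q^*(\theta,p)$ of the interpolation exponent and with $\lambda$ treated as a free parameter distinct from $\Lambda$, subject only to $\lambda \le \Lambda_\star(1,q^*,d)$. The reason for choosing $q = q^*$ is that this is precisely the exponent for which $1-\delta = \theta$ (and, as a short computation shows, both the $\Lambda$-exponent of $\K{CKN}^*(\theta,p,\Lambda)$ and the $\lambda$-exponent of $[\K{CKN}^*(1,q^*,\lambda)]^\theta$ become equal to $\theta\,\beta$), which makes the resulting estimate collapse into a single clean one-parameter inequality. Starting from an optimal $u$, Proposition~\ref{Prop:propcond1} gives $\mathcal Q_\mu^1[u] \le \K{CKN}^*(1,p,\mu)$ with $\theta\,\mu = (1-\theta)\,t + \Lambda$; in view of Theorem~\ref{Thm:DEL2011}, symmetry will follow once we establish $(1-\theta)\,t + \Lambda \le \theta\,\Lambda_\star(1,p,d)$.

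Applying the same Hölder-plus-Theorem~\ref{Cor:DEL2011} chain that produced \eqref{m000} at $q = q^*$ and taking a $\theta$-th root, the homogeneity of $\K{CKN}^*$ together with $\mathsf N^\theta = \K{CKN}^*(\theta,p,1)/[\K{CKN}^*(1,q^*,1)]^\theta$ turns the estimate into
\[
\frac{t+\Lambda}{t+\lambda} \le \mathsf N\,\Bigl(\frac{\Lambda}{\lambda}\Bigr)^{\!\beta} \quad\mbox{for any}\quad \lambda \le \Lambda_\star(1,q^*,d)\ .
\]
Setting $\lambda = \Lambda\,x$ and requiring $x > \mathsf N^{1/\beta}$ (so that $1 - \mathsf N\,x^{-\beta} > 0$), this rearranges into $t \le \Lambda\,(\mathsf N\,x - x^\beta)/(x^\beta - \mathsf N)$. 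Plugging this into $(1-\theta)\,t + \Lambda \le \theta\,\Lambda_\star(1,p,d)$ and using the algebraic identity $(1-\theta)(\mathsf N\,x - x^\beta) + (x^\beta - \mathsf N) = \theta\,(x^\beta - \mathsf N) + (1-\theta)\,\mathsf N\,(x-1)$, the sufficient condition becomes
\[
\Lambda\,\bigl[\theta\,(x^\beta - \mathsf N) + (1-\theta)\,\mathsf N\,(x-1)\bigr] \le \theta\,\Lambda_\star(1,p,d)\,(x^\beta - \mathsf N)\ .
\]

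To exploit the constraint $\lambda \le \Lambda_\star(1,q^*,d)$ optimally, I would saturate it by choosing $x = \Lambda_\star(1,q^*,d)/\Lambda$. Multiplying the above inequality by $x$ and substituting $\Lambda\,x = \Lambda_\star(1,q^*,d)$, then using the explicit ratio $\theta\,\Lambda_\star(1,p,d)/\Lambda_\star(1,q^*,d) = \theta\,(6-p)/(2\,p\,\theta - 3\,(p-2))$, the resulting inequality coincides exactly with $f(x) \ge 0$, where $f$ is the polynomial of Lemma~\ref{Lem:xstar} whose unique root in $(\mathsf N^{1/\beta},\infty)$ is $\mathsf x^*$. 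Since $f$ is strictly increasing on that interval, the condition amounts to $x \ge \mathsf x^*$, i.e., $\Lambda \le \Lambda_\star(1,q^*,d)/\mathsf x^* = \Lambda_2(\theta,p,d)$. For $\Lambda < \Lambda_2$, strict inequality $\mu < \Lambda_\star(1,p,d)$ follows, and Theorem~\ref{Thm:DEL2011} yields symmetry. For the limit as $\theta \to 1$, Lemma~\ref{Cor:10} gives $\mathsf N \to 1^+$ (hence $\mathsf N^{1/\beta} \to 1^+$), and $q^* \to p$ (so $\Lambda_\star(1,q^*,d) \to \Lambda_\star(1,p,d)$); the defining equation for $\mathsf x^*$ degenerates to $(6-p)(x^\beta - 1)(x-1) = 0$ with $\beta \to (p+2)/(2\,p)$, forcing $\mathsf x^*(\theta,p) \to 1^+$ by continuity, and therefore $\Lambda_2(\theta,p,d) \to \Lambda_\star(1,p,d)$.

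The main obstacle, which I expect to be more algebraic than conceptual, is the verification that with the saturated choice $x = \Lambda_\star(1,q^*,d)/\Lambda$, the combined constraints boil down \emph{exactly} to the polynomial $f$ of Lemma~\ref{Lem:xstar}. This requires the three scaling exponents (of $\Lambda$ in $\K{CKN}^*(\theta,p,\Lambda)$, of $\lambda$ in $[\K{CKN}^*(1,q^*,\lambda)]^\theta$, and of $x$ through the factor $(\Lambda/\lambda)^\beta$) to line up correctly — precisely what the choice $q = q^*$ (as opposed to $q > q^*$ in Lemma~\ref{Lem:First}) ensures through the identity $\theta\,(q^*+2)/(2\,q^*) = \theta\,\beta$.
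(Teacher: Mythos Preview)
Your proposal is correct and follows essentially the same route as the paper. The only cosmetic difference is in how the optimization over $\lambda$ is organized: the paper keeps $x=\lambda/\Lambda$ free, writes the two constraints as $\Lambda\le\phi(x)$ and $\Lambda<\chi(x)=\Lambda_\star(1,q^*,d)/x$, observes that $\phi-\chi$ has the sign of the function $f$ from Lemma~\ref{Lem:xstar}, and reads off $\Lambda_2=\phi(\mathsf x^*)=\chi(\mathsf x^*)$; you instead saturate $\lambda=\Lambda_\star(1,q^*,d)$ from the start (equivalently $\Lambda x=\Lambda_\star(1,q^*,d)$), which collapses the remaining constraint directly into $f(x)\ge0$, i.e.\ $x\ge\mathsf x^*$, i.e.\ $\Lambda\le\Lambda_2$. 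These are two phrasings of the same computation, and your check of the exponent identity $\theta-\tfrac{p-2}{2p}=\theta\,(1-\tfrac{q^*-2}{2q^*})=\theta\beta$ is exactly the observation that makes the choice $q=q^*$ work.
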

\begin{proof} As in the proof of Lemma~\ref{Lem:First}, the starting point of our estimate is inequality \eqref{m000}, which becomes
\[\label{k1}
\frac{t+\Lambda}{t+\lambda}\le\mathsf N(\theta,p)\(\frac\Lambda\lambda\)^\beta\quad\mbox{if}\quad\lambda<\Lambda_\star(1,q,d)\ ,
\]
under the restriction that we choose $q=q^*(\theta,p)$ given by~\eqref{qstar}, that is $1-\delta=\theta$ with $\delta$ as in \eqref{m5}. Remarkably, we observe that, for this specific value of $q$, we have
\[
\theta-\frac{p-2}{2\,p}=\(1-\frac{q-2}{2\,q}\)(1-\delta)
\]
and, as a consequence, 
\[
\frac{t+\Lambda}{t+\lambda}\le\mathsf N\(\frac\Lambda\lambda\)^\beta
\]
where $\beta:=1-\frac{p-2}{2\,p\,\theta}$ and $\mathsf N=\mathsf N(\theta,p)$. Hence we get that
\[
t\le\frac{\mathsf N\,\Lambda^\beta\,\lambda-\Lambda\,\lambda^\beta}{\lambda^\beta-\mathsf N\,\Lambda^\beta}=:\bar t\ .
\]
As in the proof of Lemma~\ref{Lem:First}, we can apply Theorem~\ref{Thm:DEL2011} if
\begin{itemize}
\item[$\bullet$] Condition~\eqref{Cdt1} holds and a sufficient condition is therefore given by the condition
\[
(1-\theta)\,\bar t+\Lambda\le\theta\,\Lambda_\star(1,p,d)\ ,
\]
that is,
\[
\big(\theta\,\Lambda_\star(1,p,d)-\Lambda\big)\(\lambda^\beta-\mathsf N\,\Lambda^\beta\)\ge(1-\theta)\(\mathsf N\,\Lambda^\beta\,\lambda-\Lambda\,\lambda^\beta\).
\]
\item[$\bullet$] Condition $\lambda <\Lambda_\star(1,q,d)$, which is required to get~\eqref{m000}, holds, \emph{i.e.},
\[
\lambda<\Lambda_\star(1,q,d)=\frac14\,(d-1)\,\frac{6-q}{q-2}=\frac14\,(d-1)\,\frac{2\,p\,\theta-3\,(p-2)}{p-2}\ .
\]
\end{itemize}
For a suitable $x=\lambda/\Lambda> \mathsf N^{1/\beta}$, to be chosen, these two conditions amount to
\begin{eqnarray*}
&&\Lambda\le\phi(x):=\frac{\theta\,\Lambda_\star(1,p,d)\,\big(x^\beta-\mathsf N\big)}{\theta\,(x^\beta-\mathsf N)+(1-\theta)\,(x-1)\,\mathsf N}\ ,\\
&&\Lambda<\chi(x):=\frac14\,(d-1)\,\frac{2\,p\,\theta-3\,(p-2)}{p-2}\,\frac 1x\ .
\end{eqnarray*}
After replacing $\Lambda_\star(1,p,d)$ by its value according to~\eqref{lambdastar}, we get that $\phi(x)-\chi(x)$ has the sign of $f(x)$ as defined in the proof of Lemma~\ref{Lem:xstar}. By Corollary~\ref{Cor:10}, we know that $\mathsf N\ge1$ and conclude henceforth that any minimizer is \emph{symmetric} if $\Lambda<\chi(\mathsf x^*(\theta,p))=\Lambda_2(\theta,p,d)$.

In the limiting regime corresponding to as $\theta\to1_-$, we observe that $\phi(x)=\Lambda_\star(1,p,d)$ and $\chi(x)=\Lambda_\star(1,p,d)/x$, so that $\lim_{\theta\to1}\Lambda_2(\theta,p,d)=\chi(1)=\Lambda_\star(1,p,d)$.
\end{proof}

\section{Proof of the main results}\label{Sec:Proofs} 

\begin{proof}[Theorem~\ref{Thm:Main1}] It is a straightforward consequence of Lemma~\ref{Lem:First} and Lem\-ma~\ref{Lem:Second}. Notice that $\lim_{\theta\to1}\Lambda_1(\theta,p,d)=\Lambda_\star(1,p,d)$ because
\[
\lim_{\theta\to1}\frac{\theta\,\Lambda_\star(1,p,d)}{(1-\theta)\,\Pi^*(\theta,p,q)+\theta}=\Lambda_\star(1,p,d)\ .\]\end{proof}

\begin{proof}[Theorem~\ref{Thm:Main2}] The function $q\mapsto\Lambda_1(1,q,d)$ is monotone decreasing and
\[
q^*(\theta,p)-p=\frac{p\,(p-2)\,(1-\theta)}{2-p\,(1-\theta)}\ge0
\]
so that, for $i=1$, $2$,
\[
\Lambda_i(\theta,p,d)\le\Lambda_\star(1,q^*(\theta,p),d)\le\Lambda_\star(1,p,d)<\Lambda_{\rm FS}(\theta,p,d)\ .
\]

By definition of $\Lambda_{\rm s}(\theta,p,d)$, we know that $\Lambda_\star(\theta,p,d)\le\Lambda_{\rm s}(\theta,p,d)$. By Theorem~\ref{Thm:Main1}, if $\Lambda=\Lambda_\star(\theta,p,d)$ any minimizer for $\K{CKN}(\theta,p,\Lambda)$ is symmetric. On the other hand, by continuity, we know that
\[
\K{CKN}\big(\theta,p,\Lambda_{\rm s}(\theta,p,d)\big)=\K{CKN}^*\big(\theta,p,\Lambda_{\rm s}(\theta,p,d)\big)\ .
\]

Let us assume that $\Lambda_{\rm s}(\theta,p,d)<\Lambda_{\rm FS}(\theta,p,d)$ and consider a sequence $(\lambda_n)_{n\in\N}$ converging to $\Lambda_{\rm s}(\theta,p,d)$ with $\lambda_n>\Lambda_{\rm s}(\theta,p,d)$. If $u_n$ is a non-symmetric minimizer of $\K{CKN}(\theta,p,\lambda_n)$, we can pass to the limit: up to the extraction of a subsequence, $(u_n)_{n\in\N}$ converges in $\H^1(\mathcal C)$ towards a minimizer $u$ for $\K{CKN}(\theta,p,\Lambda_{\rm s}(\theta,p,d))$. The function $u$ cannot only depend on $s$, because any symmetric minimizer for $\K{CKN}^*(\theta,p,\Lambda)$ is a strict local minimum in $\H^1(\mathcal C)$ due to the fact that $\Lambda_{\rm s}(\theta,p,d)<\Lambda_{\rm FS}(\theta,p,d)$. Hence, for $\Lambda=\Lambda_{\rm s}(\theta,p,d)$ there are two distinct minimizers for $\K{CKN}(\theta,p,\Lambda)$: one is symmetric and the other one \emph{is not} symmetric. This proves that $\Lambda_\star(\theta,p,d)<\Lambda_{\rm s}(\theta,p,d)$ if $\theta>\vartheta(p,\min\{3,d\})$.

In the other cases, that is, if either $d=3$ and $\theta=\vartheta(p,3)$, or $d=2$ and $\theta>0$, the same method applies if we replace $\Lambda_\star(\theta,p,d)$ by $\Lambda_2(\theta,p,d)$.\end{proof}

\begin{proof}[Theorem~\ref{Cor:WLH}] Let us consider $f(x)$ as in the proof of Lemma~\ref{Lem:xstar} and assume that $\theta=\gamma\,(p-2)$. As $p\to2_+$, $f(x)/(p-2)$ converges towards
\[
f_0(x):=4\,\gamma\,x^{\beta_0+1}-(8\,\gamma-3)\,\mathsf N_0 \, x+(4\,\gamma-3)\,\mathsf N_0 \quad\mbox{with}\quad\beta_0=1-\tfrac1{4\,\gamma}\ .
\]
We easily check that the function $f_0(x)$ is convex for $x>0$, $f_0(\mathsf N_0^{1/\beta_0})<0$ and 
$f_0'(\mathsf N_0^{1/\beta_0})= 2\,\mathsf N_0 >0$.
We conclude that $f_0(x)$ has a unique root for $x > \mathsf N_0^{1/\beta_0}$. 
We denote this unique root by $\mathsf x_0^* =\mathsf x_0^*(\gamma)$.
It follows that $\mathsf x^*(\gamma\,(p-2),p)$ converges to $\mathsf x_0^*(\gamma)$ as $p\to2_+$.
Symmetry then is established by passing to the limit for any $\Lambda\in\big(0,\Lambda_0(\gamma,d)\big)$ with $\Lambda_0(\gamma,d)$ given by~\eqref{Lambda0}.
\end{proof}

\section{An approximation and some numerical results}\label{Sec:Numerics}

The functions $\mathsf x^*(\theta,p)$ and $\mathsf x_0^*(\gamma)$ which enter in the results of Theorem~\ref{Thm:Main1} and Theorem~\ref{Cor:WLH} are not explicit but easy to estimate, which in turn gives explicit estimates of $\Lambda_2(\theta,p,d)$ and $\Lambda_0(\gamma,d)$. Let
\begin{eqnarray*}
&&\alpha=2\,p\,\big(\theta-\vartheta(p,3)\big)=2\,p\,\theta-3\,(p-2)\ ,\\
&&\beta=\beta(\theta,p)=1-\frac{p-2}{2\,p\,\theta} \ ,
\end{eqnarray*}
and
\[
\Lambda_{2,\rm approx} (\theta,p,d) :=\frac{(d-1)\,\xa}{4\,(p-2)}\,\frac{\beta\,\theta\,(6-p)-\alpha\,(1-\theta + \beta\,\theta\,\mathsf N^{-1/\beta})}{\beta\,\theta\,(6-p)\,\mathsf N^{1/\beta} -\alpha\,(\beta\,\theta +1-\theta)}\ .
\]
\begin{proposition}\label{Prop:Estim1} Suppose that either $d =2$ and $p\in(2,6)$ or else $d\ge 3$ and $p\in(2,2^*)$.
Then for any $\theta \in (\vartheta(p,3),1)$, we have the estimate
\[
\Lambda_2(\theta,p,d)>\Lambda_{2,\rm approx}(\theta,p,d)\ .
\]
\end{proposition}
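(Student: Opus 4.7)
The idea is to bound $\mathsf x^*(\theta,p)$ from above and invert: since by definition
\[
\Lambda_2(\theta,p,d)=\frac{(d-1)\,\alpha}{4\,(p-2)\,\mathsf x^*(\theta,p)},
\]
any strict upper bound $\mathsf x^*(\theta,p)<x_T$ immediately yields $\Lambda_2(\theta,p,d)>\frac{(d-1)\,\alpha}{4\,(p-2)\,x_T}$. Comparing the shape of the fraction in the definition of $\Lambda_{2,\rm approx}$ with the quantity $x_0 - f(x_0)/f'(x_0)$, where $f$ is the function from Lemma~\ref{Lem:xstar} and $x_0:=\mathsf N^{1/\beta}$, suggests that the natural $x_T$ to use is precisely the root of the tangent line to $f$ at $x_0$. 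So the plan is to run a standard convexity/Newton-step argument.

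First, I would recall from the proof of Lemma~\ref{Lem:xstar} that on the interval $(x_0,\infty)$, the function $f$ is strictly convex, strictly increasing, and satisfies $f(x_0)<0$, with $\mathsf x^*$ being its unique root there. This immediately implies that the tangent line $T(x):=f(x_0)+f'(x_0)\,(x-x_0)$ lies strictly below $f$ for $x>x_0$; hence its unique root $x_T=x_0-f(x_0)/f'(x_0)$ is well-defined (since $f'(x_0)>0$, a fact already established in Lemma~\ref{Lem:xstar}) and satisfies $T(\mathsf x^*)<f(\mathsf x^*)=0$, forcing $\mathsf x^*<x_T$ strictly.

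Second, I would evaluate $f(x_0)$ and $f'(x_0)$ explicitly using $x_0^\beta=\mathsf N$. A short computation based on the formulas in the proof of Lemma~\ref{Lem:xstar} gives
\[
f(x_0)=-\,\alpha\,(1-\theta)\,(\mathsf N^{1/\beta}-1)\,\mathsf N,\qquad
f'(x_0)=\mathsf N\,\bigl[\beta\,\theta\,(6-p)-\alpha\,\bigl(1-\theta+\beta\,\theta\,\mathsf N^{-1/\beta}\bigr)\bigr].
\]
Substituting into $x_T=x_0-f(x_0)/f'(x_0)$ and simplifying (the two $\mathsf N^{1/\beta}$ contributions to the numerator combine so that the $(1-\theta)$ terms cancel), one obtains
\[
x_T=\frac{\beta\,\theta\,(6-p)\,\mathsf N^{1/\beta}-\alpha\,(\beta\,\theta+1-\theta)}{\beta\,\theta\,(6-p)-\alpha\,\bigl(1-\theta+\beta\,\theta\,\mathsf N^{-1/\beta}\bigr)},
\]
which is exactly the reciprocal of the second fraction defining $\Lambda_{2,\rm approx}$. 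Combining $\mathsf x^*<x_T$ with the formula for $\Lambda_2$ finishes the proof.

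The only non-trivial step is the algebraic identification of $x_T$ with the prescribed expression, which is a routine simplification. Strict convexity of $f$ on $(x_0,\infty)$---already available from Lemma~\ref{Lem:xstar}---is what upgrades the bound from non-strict to strict; one should also note in passing that the denominator of $\Lambda_{2,\rm approx}$ is positive precisely because it equals $f'(x_0)/\mathsf N$, which was shown to be positive in that lemma, so $\Lambda_{2,\rm approx}$ is itself positive and the inequality $\Lambda_2>\Lambda_{2,\rm approx}$ is meaningful.
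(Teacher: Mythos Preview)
Your proposal is correct and follows essentially the same approach as the paper: both use the Newton-step bound $\mathsf x^*(\theta,p)<\mathsf N^{1/\beta}-f(\mathsf N^{1/\beta})/f'(\mathsf N^{1/\beta})$ coming from the convexity of $f$ established in Lemma~\ref{Lem:xstar}, compute $f$ and $f'$ at $\mathsf N^{1/\beta}$ explicitly, and identify the resulting expression with the reciprocal of the fraction defining $\Lambda_{2,\rm approx}$. Your additional remark that the positivity of the denominator in $\Lambda_{2,\rm approx}$ follows from $f'(\mathsf N^{1/\beta})>0$ is a useful clarification.
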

\begin{proof} Let us consider the function $f$ defined in the proof of Lemma~\ref{Lem:xstar} and recall that $f''(x)$ is positive for any $x\ge\mathsf N^{1/\beta}>1$. Moreover we verify that
\begin{eqnarray*}
f(\mathsf N^{1/\beta}) & = & -\,(1-\theta)\,\alpha\,\mathsf N\,(\mathsf N^{1/\beta}-1) <0 \ , \\
f'(\mathsf N^{1/\beta})& = &\mathsf N\left[\beta\,\theta\,(6-p)-\alpha\,\big(1-\theta +\beta\,\theta\,\mathsf N^{-1/\beta}\big)\right] \, >0.
\end{eqnarray*}
which provides the estimate
\[
\mathsf x^*(\theta,p)<\mathsf N^{1/\beta}-\frac{f(\mathsf N^{1/\beta})}{ f'(\mathsf N^{1/\beta})}
= \frac{\beta\,\theta\,(6-p)\,\mathsf N^{1/\beta}-\alpha\,(\beta\,\theta+1-\theta)}{\beta\,\theta\,(6-p)-\alpha\,\big(1-\theta +\beta\,\theta\,\mathsf N^{-1/\beta}\big)} \ ,
\]
and the result follows.
\end{proof}

Next we give an estimate of $\Lambda_0(\gamma,d)$ in Theorem~\ref{Cor:WLH}. Let
\[
\Lambda_{0,\rm approx}(\gamma,d):= \frac{(d-1)\,(\gamma-\frac34)}{2\(\gamma-\frac14\)\mathsf N_0^{\frac{4 \xg}{4 \xg-1}} - 2 \(\gamma-\frac34\) } \ ,
\]
with $\mathsf N_0(\gamma)$ as defined by \eqref{N0}.
\begin{proposition}\label{Prop:Estim2} Assume that $d \geq 2$ and $\xg>3/4$. Then
\[
\Lambda_0(\gamma,d)> \Lambda_{0,\rm approx}(\gamma,d) \ .
\]
\end{proposition}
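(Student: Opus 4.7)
\textbf{Plan for Proposition~\ref{Prop:Estim2}.} The strategy mirrors the proof of Proposition~\ref{Prop:Estim1}: exploit the strict convexity of the function $f_0$ introduced in the proof of Theorem~\ref{Cor:WLH} and bound its unique root $\mathsf x_0^*(\gamma)$ in $(\mathsf N_0^{1/\beta_0},\infty)$ from above by the Newton tangent line at $\mathsf N_0^{1/\beta_0}$. Since $\Lambda_0(\gamma,d)=(d-1)(\gamma-3/4)/\mathsf x_0^*(\gamma)$, the stated inequality is equivalent to
$$
\mathsf x_0^*(\gamma)\,<\,2(\gamma-\tfrac14)\,\mathsf N_0^{1/\beta_0}-2(\gamma-\tfrac34),
$$
so the whole proposition reduces to producing the right tangent-line upper bound on $\mathsf x_0^*$.

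First I would record the two elementary identities at $x=\mathsf N_0^{1/\beta_0}$. Using $\beta_0+1=2-1/(4\gamma)$, a short computation gives
$$
f_0(\mathsf N_0^{1/\beta_0})=-(4\gamma-3)\,\mathsf N_0\,(\mathsf N_0^{1/\beta_0}-1),\qquad f_0'(\mathsf N_0^{1/\beta_0})=2\,\mathsf N_0,
$$
(the coefficient $4\gamma(\beta_0+1)-(8\gamma-3)$ telescopes to $2$). Both signs are correct: $f_0(\mathsf N_0^{1/\beta_0})<0$ and $f_0'(\mathsf N_0^{1/\beta_0})>0$, provided $\gamma>3/4$ and $\mathsf N_0(\gamma)>1$, the latter being the $p\to2_+$ limit of Lemma~\ref{Cor:10} already used in the proof of Theorem~\ref{Cor:WLH}.

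Next, since $f_0''(x)=4\gamma\,\beta_0\,(\beta_0+1)\,x^{\beta_0-1}>0$, the function $f_0$ is strictly convex, hence its graph lies strictly above every tangent line. Therefore the tangent at $\mathsf N_0^{1/\beta_0}$ meets the axis strictly to the right of $\mathsf x_0^*$, which yields
$$
\mathsf x_0^*(\gamma)\,<\,\mathsf N_0^{1/\beta_0}-\frac{f_0(\mathsf N_0^{1/\beta_0})}{f_0'(\mathsf N_0^{1/\beta_0})}\,=\,\mathsf N_0^{1/\beta_0}+\frac{(4\gamma-3)(\mathsf N_0^{1/\beta_0}-1)}{2}\,=\,\tfrac{4\gamma-1}{2}\,\mathsf N_0^{1/\beta_0}-\tfrac{4\gamma-3}{2},
$$
which is exactly $2(\gamma-\tfrac14)\mathsf N_0^{1/\beta_0}-2(\gamma-\tfrac34)$, the denominator defining $\Lambda_{0,\rm approx}(\gamma,d)$. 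Inverting and multiplying by $(d-1)(\gamma-3/4)>0$ gives the strict inequality $\Lambda_0(\gamma,d)>\Lambda_{0,\rm approx}(\gamma,d)$.

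There is no real obstacle: the whole argument is algebraic, and the only point that requires any care is the strictness of the sign of $f_0(\mathsf N_0^{1/\beta_0})$, which relies on $\mathsf N_0(\gamma)>1$; this follows from the monotonicity already established in Lemma~\ref{Cor:10} by passing to the limit $p\to2_+$ in the explicit formula~\eqref{N0}. Everything else is a one-line tangent-line bound and an arithmetic simplification parallel to the proof of Proposition~\ref{Prop:Estim1}.
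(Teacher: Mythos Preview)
Your proposal is correct and follows essentially the same route as the paper: compute $f_0(\mathsf N_0^{1/\beta_0})=-(4\gamma-3)\,\mathsf N_0\,(\mathsf N_0^{1/\beta_0}-1)<0$ and $f_0'(\mathsf N_0^{1/\beta_0})=2\,\mathsf N_0>0$, use strict convexity of $f_0$ to bound $\mathsf x_0^*(\gamma)$ by the Newton step $\mathsf N_0^{1/\beta_0}-f_0/f_0'$, and simplify to $2(\gamma-\tfrac14)\,\mathsf N_0^{4\gamma/(4\gamma-1)}-2(\gamma-\tfrac34)$. Your explicit remark that $\mathsf N_0(\gamma)>1$ is needed (and obtained as the $p\to2_+$ limit of Lemma~\ref{Cor:10}) is a welcome clarification that the paper leaves implicit.
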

\begin{proof} Recall that $\beta_0=1-\frac1{4\,\gamma}$. Let us consider the function $f_0$ defined in the proof of Theorem~\ref{Cor:WLH}. We note that $f_0''(x)$ is positive for $x>0$. Moreover we verify that $f_0'(\mathsf N_0^{1/\beta_0})=2\,\mathsf N_0>0$ and $f_0(\mathsf N_0^{1/\beta_0})=-(4\,\gamma-3)\,\mathsf N_0\,(\mathsf N_0^{1/\beta_0}-1) <0$, which provides the estimates
\[\textstyle
\mathsf x_0^*(\gamma)<\mathsf N_0^{1/\beta_0}-\frac{f(\mathsf N_0^{1/\beta_0})}{ f'(\mathsf N_0^{1/\beta})}= 2\(\xg-\frac14\) \mathsf N_0^{\frac{4 \xg}{4 \xg-1}} - 2\(\xg-\frac34\) \ ,
\]
and the result follows.
\end{proof}

To conclude this paper, let us illustrate Theorems~\ref{Thm:Main1} and~\ref{Thm:Main2} with some numerical results. First we address the case of subcritical $\theta\in(\vartheta(p,d),1)$ and compare $\Lambda_\star$ with $\Lambda_{\rm FS}$: Fig.~\ref{F1} corresponds to the particular case $d=5$ and $\theta=0.5$.

The expression of $\Lambda_\star(\theta,p,d)$ is not explicit but easy to compute numerically. We recall that $\Lambda_\star$ is the maximum of $\Lambda_1$ and $\Lambda_2$, both of them being non-explicit. In practice, for low values of the dimension $d$, the relative difference of $\Lambda_1$ and $\Lambda_2$ is in the range of a fraction of a percent to a few percents, depending on $\theta$ and on the exponent $p$. Moreover, we numerically observe that $\Lambda_1\le\Lambda_2$, at least for the values of the parameters considered in Fig.~\ref{F1}. The estimate $\Lambda_{2,\rm approx}(\theta,p,d)$ of Proposition~\ref{Prop:Estim1} is remarkably good.

\begin{figure}[ht]
\begin{center}
\includegraphics[width=9cm]{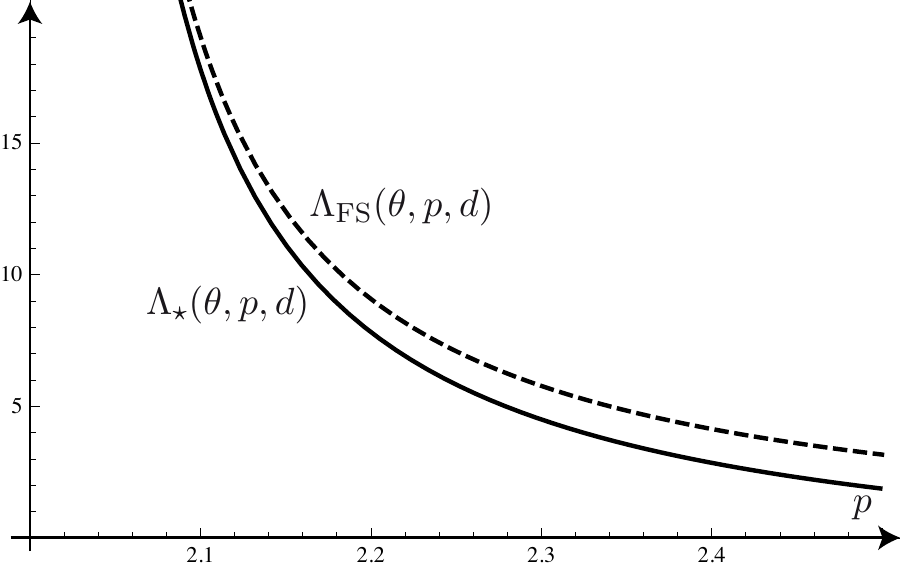}
\end{center}
\caption{\scriptsize\label{F1} Curves $p\mapsto\Lambda_\star(\theta,p,d)$ and $\Lambda\mapsto\Lambda_{\rm FS}(\theta,p,d)$ with $\theta=0.5$ and $d=5$. Symmetry holds for $\Lambda\le\Lambda_\star(\theta,p,d)$, while symmetry is broken for $\Lambda\ge\Lambda_{\rm FS}(\theta,p,d)$. The relative difference of $\Lambda_1$ and $\Lambda_2$, \emph{i.e.}, $\Lambda_2(\theta,p,d)/\Lambda_1(\theta,p,d)-1$, is below 4\%. The estimate of Proposition~\ref{Prop:Estim1} is such that $1-\Lambda_{2,\rm approx}(\theta,p,d)/\Lambda_2(\theta,p,d)$ is of the order of $5\times10^{-3}$.}
\end{figure}

\medskip In Fig.~\ref{F3}, we consider the critical case $\theta=\vartheta(p,d)$. The plot corresponds to $d=5$ and all $p$ in the interval $(2,10/3)$. The exponent $\vartheta(p,d)$ is the one which enters in the Gagliardo-Nirenberg inequality
\[
\nrmrd up^2\le\C{GN}(p,d)\,\nrmrd{\nabla u}2^{2\,\vartheta(p,d)}\,\nrmrd u2^{2\,(1-\vartheta(p,d))}\quad\forall\,u\in\H^1(\R^d)
\]
on the Euclidean space $\R^d$, \emph{without weights}. Here $\C{GN}(p,d)$ denotes the optimal constant and $p\in(2,\infty)$ if $d=1$ or $2$, $p\in(2,2^*]$ if $d\ge3$. The optimizers are radially symmetry but not known explicitly. 

It has been shown in \cite[Theorem~1.4]{1005} that optimal functions for~\eqref{Ineq:GenInterp} exist if $\C{GN}(p,d)<\C{CKN}(\theta,p,a)$. On the other hand, optimal functions cannot be symmetric $\C{GN}(p,d)>\C{CKN}^*(\theta,p,a)$: see \cite[Section~5]{springerlink:10.1007/s00526-011-0394-y} for further details and consequences. This symmetry breaking condition determines a curve $p\mapsto\Lambda_{\rm GN}(p,d)$ which has been computed numerically in \cite{1008,Oslo}: there are values of~$p$ and $d$ for which the condition $\Lambda>\Lambda_{\rm GN}(p,d)$, which guarantees symmetry breaking (but not existence), is weaker than the condition $\Lambda>\Lambda_{\rm FS}(\theta,p,d)$, that is $\Lambda_{\rm GN}(p,d)<\Lambda_{\rm FS}(\theta,p,d)$. See Fig.~\ref{F3}. A rather complete scenario of explanations, based on numerical computations and some formal expansions, has been established in \cite{Freefem,DE2012}. As it had to be expected, we numerically observe that $\Lambda_\star(\theta,p,d)\le\min\{\Lambda_{\rm FS}(\theta,p,d),\Lambda_{\rm GN}(p,d)\}$ when $\theta=\vartheta(p,d)$, for any $p\in(2,2^*)$.
\begin{figure}[hb]
\begin{center}
\includegraphics[width=9cm]{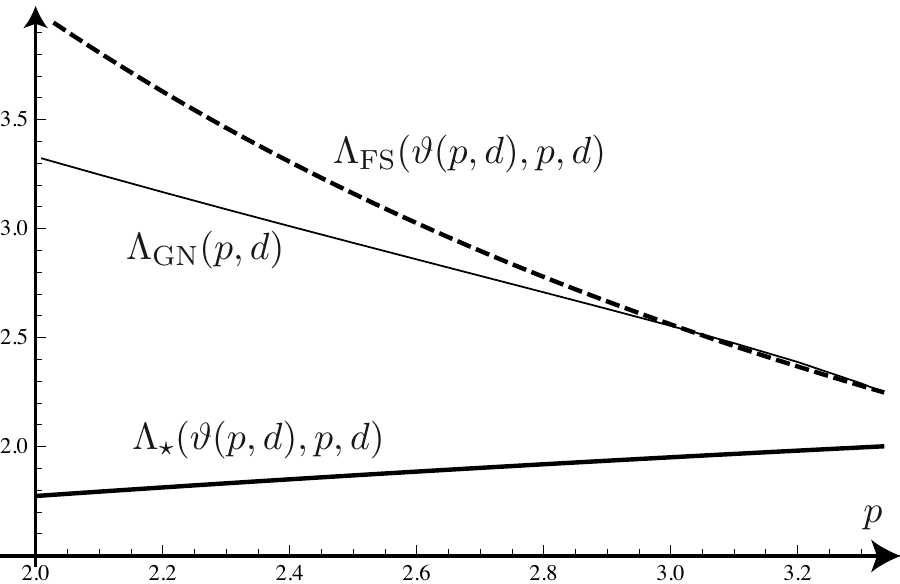}
\end{center}
\caption{\scriptsize\label{F3} With $\theta=\vartheta(p,d)$, the curve $p\mapsto\Lambda_\star(\theta,p,d)$ is always below the curves $p\mapsto\Lambda_{\rm FS}(\theta,p,d)$ and $p\mapsto\Lambda_{\rm GN}(p,d)$ for any $p\in(2,2^*)$, although $\Lambda_{\rm FS}$ and $\Lambda_{\rm GN}$ are not ordered. The plot corresponds to $d=5$ and we may notice that $\Lambda_{\rm GN}(p,d)<\Lambda_{\rm FS}$ if $p$ is small enough.}
\end{figure}

\newpage

\bigskip\begin{spacing}{0.75}\noindent{\small{\bf Acknowlegments.} J.D.~thanks S.F.~and A.T.~for welcoming him in Heraklion. J.D.~and M.J.E.~have been supported by the ANR project NoNAP. J.D.~has also been supported by the ANR projects STAB and Kibord.}\\[6pt]
{\sl\scriptsize\copyright~2014 by the authors. This paper may be reproduced, in its entirety, for non-commercial purposes.}\end{spacing}
\end{document}